\tikzset
{
    treenode/.style = {circle, draw=black, align=center, minimum size=1cm},
    subtree/.style  = {isosceles triangle, draw=black, align=center, minimum height=0.5cm, minimum width=1cm, shape border rotate=90, anchor=north}
}
\renewcommand\mathcal{\mathscr}
\theoremstyle{plain}
\newtheorem{theorem}{Theorem}[section]
\newtheorem*{theorem*}{Theorem}
\newtheorem{lemma}[theorem]{Lemma}
\newtheorem*{lemma*}{Lemma}
\newtheorem{proposition}[theorem]{Proposition}
\theoremstyle{remark}
\newtheorem{remark}[theorem]{Remark}
\newtheorem*{remark*}{Remark}
\theoremstyle{definition}
\newtheorem{definition}[theorem]{Definition}
\newtheorem*{definition*}{Definition}
\theoremstyle{example}
\newtheorem*{example*}{Example}
\numberwithin{equation}{section}
\newcommand\quant{\advance\quantno by1
                      \ifnum\quantno=1\qquad\else\quad\fi\forall }
\newcommand\itemno[1]{(\romannumeral #1)}
\newcommand\rest[1]{\kern-.1em
          \lower.5ex\hbox{$\scriptstyle #1$}\kern.05em}
\renewcommand\mod[1]{\vert{#1}\vert}
\newcommand\bigmod[1]{\bigl\vert{#1}\bigr|}
\newcommand\norm[2]{{\Vert{#1}\Vert_{#2}}}
\newcommand\bignorm[2]{\left.{\bigl\Vert{#1}\bigr\Vert_{#2}}\right.}
\newcommand\bignormto[3]{\left.{\bigl\Vert{#1}\bigr\Vert_{#2}^{#3}}\right.}
\newcommand\bigopnorm[2]{\big|\!\big|\!\big| {#1} \big|\!\big|\!\big|_{#2}}
\newcommand\bigopnormto[3]{\big|\!\big|\!\big| {#1} \big|\!\big|\!\big|_{#2}^{#3}}
\newcommand\wrt{\,\text{\rm d}}
\newcommand\BN{\mathbb{N}}
\newcommand\BZ{\mathbb{Z}}
\newcommand\cA{\mathcal{A}}   
  \newcommand\fB{\mathfrak{B}}
\newcommand\cE{\mathcal{E}}   
  \newcommand\fF{\mathfrak{F}} 
  \newcommand\fG{\mathfrak{G}} 
  \newcommand\fH{\mathfrak{H}}
\newcommand\cJ{\mathcal{J}}
\newcommand\cM{\mathcal{M}}     
\newcommand\cN{\mathcal{N}}
  \newcommand\fS{\mathfrak{S}}  
  \newcommand\fT{\mathfrak{T}}  
  \newcommand\fU{\mathfrak{U}}  
  \newcommand\fV{\mathfrak{V}}
\newcommand\al{\alpha}
\newcommand\be{\beta}
\newcommand\ga{\gamma}    
\newcommand\de{\delta}
  \newcommand\vep{\varepsilon}
\newcommand\la{\lambda}   
\newcommand\om{\omega}    \newcommand\Om{\Omega}  
\newcommand\si{\sigma}    
\newcommand\vp{\varphi}
\newcommand\vr{\varrho}
\newcommand\funnyk{k\hbox to 0pt{\hss\phantom{g}}}
\newcommand\lu[1]{L^1(#1)}
\newcommand\lp[1]{L^p(#1)}
\newcommand\ly[1]{L^\infty(#1)}
\newcommand\lorentz[3]{L^{#1,#2}(#3)}
\newcommand\wt{\widetilde}
\newcommand\whH{\widehat{\phantom{G}}\hbox to 0pt{\hss $H$}}
\newcommand\emspace{\hbox to 6pt{\hss}}
\newcommand\ds{\displaystyle}
\newcommand\rmi{\hbox{\rm (i)}}
\newcommand\rmii{\hbox{\rm (ii)}}
\newcommand\ioty{\int_0^{\infty}}
\newcommand\One{{\mathbf{1}}}
\newcommand\diam{\mathrm{diam}}
\newcommand\pab{\tau}
\DeclareSymbolFont{EUEX}{U}{euex}{m}{n}
\DeclareSymbolFont{euexlargesymbols}{U}{euex}{m}{n}
\DeclareMathSymbol{\intop}{\mathop}{euexlargesymbols}{"52}
     \def\int{\intop\nolimits}
\DeclareSymbolFont{euexsymbols}     {U}{euex}{m}{n}
\DeclareMathSymbol{\smallint}{\mathop}{euexsymbols}{"52}
\begin{document}

\title[Maximal operators]{Hardy--Littlewood maximal operators \\ on trees with bounded geometry
}


\keywords{Trees with bounded geometry, centred and uncentred Hardy--Littlewood maximal functions, Kunze--Stein phenomenon, rough isometries}

\thanks{
Levi, Santagati and Vallarino are members of the Gruppo Nazionale per l'Analisi Matematica, la Probabilit\`a e le loro Applicazioni (GNAMPA) of the
Istituto Nazionale di Alta Matematica (INdAM).
Levi, Santagati and Vallarino have been supported in part by the Project ``Harmonic analysis on continuous and discrete structures'' (bando Trapezio 
Compagnia di San Paolo CUP E13C21000270007).
Levi has been supported in part by the Generalitat de Catalunya (grant 2021 SGR 00071), the Spanish Ministerio de Ciencia e Inn\'ovacion (project PID2021-12315NB-100).
}

\author[M.\ Levi]{Matteo Levi}
\address[Matteo Levi]{Facultat de Matem\`atiques i Inform\`atica, Universitat de Barcelona, Gran Via de les Corts Catalanes 585, 08007, Barcelona, Spain.}
\email{matteo.levi@ub.edu}

\author[S. Meda]{Stefano Meda}
\address[Stefano Meda]{Dipartimento di Matematica e Applicazioni \\ Universit\`a di Milano-Bicocca\\
via R.~Cozzi 53\\ I-20125 Milano\\ Italy}
\email{stefano.meda@unimib.it}

\author[F.\ Santagati]{Federico Santagati} \address[Federico Santagati]{Dipartimento di Matematica e Applicazioni \\ Universit\`a di Milano-Bicocca\\
via R.~Cozzi 53\\ I-20125 Milano\\ Italy}
\email{federico.santagati@unimib.it}

\author[M. Vallarino]{Maria Vallarino}
\address[Maria Vallarino]{Dipartimento di Scienze Matematiche ``Giuseppe Luigi Lagrange''  \\
Politecnico di Torino\\ corso Duca degli Abruzzi 24\\ 10129 Torino\\ Italy}
\email{maria.vallarino@polito.it}

\begin{abstract}
In this paper we study the $L^p$ boundedness of the centred and the uncentred Hardy--Littlewood maximal operators on the class~$\Upsilon_{a,b}$, 
$2\leq a\leq b$, of trees with $(a,b)$-bounded geometry.  We find the sharp range of $p$, depending on $a$ and $b$, 
where the centred maximal operator is bounded on $\lp{\fT}$ for all $\fT$ in $\Upsilon_{a,b}$.
We show that there exists a tree in $\Upsilon_{a,b}$ for which the uncentred maximal function is bounded on $L^p$ if and only if $p=\infty$.  
We also extend these results to graphs which are strictly roughly isometric, in the sense of Kanai, to trees in the class $\Upsilon_{a,b}$.
\end{abstract}

\date{\today, \thistime. Preliminary version}

\maketitle
\begin{center}
{\textsl{This paper is dedicated to the memory of A. M.}}
\end{center}
\vskip0.8cm

\section{Introduction}
\label{s: Introduction}

The purpose of this paper is to prove sharp $L^p$ bounds for the centred and the uncentred Hardy--Littlewood maximal operators on 
a class of trees with \textit{bounded geometry}.

Let $\fT$ be a locally finite tree, i.e. a connected graph with no loops, in which every vertex $x$ has a finite number $\nu(x)$ of neighbours; $\nu(x)$ 
is called the \textit{valence} of $x$.  We endow $\fT$ with the standard graph distance $d$ and the set of its vertices with the counting measure $\mu$.
We denote by $B_r(x)$ the (closed) ball with centre $x$ and radius $r$, i.e.  $B_r(x) := \{y\in \fT: d(x,y) \leq r\}$.  
For the sake of notational convenience, we write $\mod{E}$ instead of $\mu(E)$ for every subset $E$ of~$\fT$.   

For each function $f$ on $\fT$, we consider its \textit{centred} and its \textit{uncentred} (HL) maximal functions, defined by 
$$
\cM f(x)
:= \sup_{r\in \BN} \, \frac{1}{\mod{B_r(x)}} \, \int_{B_r(x)} \mod{f} \wrt \mu
\quad\hbox{\textrm{and}}\quad 
\cN f(x)
:= \sup_{B \ni x} \, \frac{1}{\mod{B}} \, \int_{B} \, \mod{f} \wrt \mu,
$$
respectively.

Sharp $L^p$ bounds for $\cM$ and $\cN$ are well known for the homogeneous tree $\fT_b$, $b\geq 2$, i.e. the tree such that $\nu(x) = b+1$ for 
every vertex $x$.  Specifically, the weak type $(1,1)$ boundedness of $\cM$ has been established independently by A.~Naor and T.~Tao \cite[Theorem~1.5]{NT},
and M.~Cowling, Meda and A.~Setti \cite[Theorem~3.1]{CMS1}, and is also a consequence of the analysis of the Green operator 
for certain random walks on trees performed in \cite{RT}.  Furthermore, the operator $\cN$ is of restricted weak type $(2,2)$, 
hence, by a simple interpolation argument with the trivial $\ly{\fT_b}$ bound, it is bounded on $\lp{\fT_b}$ for $p > 2$.  
The restricted weak type $(2,2)$ estimate is a direct consequence of a result of A.~Veca~\cite[Theorem~5.1]{V}, 
who proved that the modified maximal operator $\cM_2$ (see \eqref{f: modified si} below for the definition of $\cM_\si$, $\si>1$), 
which dominates $\cN$, is of restricted weak type $(2,2)$.  Levi and Santagati \cite{LS} proved that Veca's results for the operator $\cM_2$
is optimal in the scale of Lorentz spaces and, more generally, investigated the mapping properties of $\cM_\si$ between Lorentz spaces.  
Their couterexamples may be adapted to prove that $\cN$ is unbounded on $\lp{\fT_b}$
for all $p$ in $[1,2]$.  

J.~Soria and P.~Tradacete \cite[Theorem~4.1]{ST} showed that the method of proof of~\cite[Theorem~1.5]{NT} 
is amenable to generalisations to certain nonhomogeneous trees, and even graphs, with bounded valence function.  Indeed, they proved that $\cM$
is of weak type $(1,1)$ provided a certain ``technical" condition is satisfied.  Though this condition is remarkably general and very interesting, 
quite a lot of work is needed to check that it is satisfied in concrete cases.  Further work in this direction was done by 
S.~Ombrosi et al. \cite{ORS,OR}, who proved an analogue on the $k$-rooted tree of the Fefferman--Stein inequality
and considered the problem of establishing weighted $L^p$ estimates for the centred maximal operator. 

We also mention recent work of D.~Kosz \cite{K1,K2,K3}, who produced examples of discrete metric measure spaces, where $\cM$ and/or $\cN$ are bounded
on $L^p$ for every $p$ in any preassigned interval of the form $(p_0,\infty]$, where $p_0 \geq 1$.  His investigation includes also finer statements
involving restricted weak type estimates, or weak type estimates at the endpoint $p_0$.
However, it is worth observing that such metric measure spaces are not locally doubling. 

Notice that there are examples of trees $\fT$ with unbounded valence function~$\nu$ such that $\cM$, and \textit{a fortiori} $\cN$, 
is unbounded on $L^p(\fT)$ for every $p\in [1,\infty)$: see the example at the beginning of Section~\ref{s: Boundedness of cM}.  It may 
be interesting to mention that in this example the counting measure is not locally doubling.

The considerations above suggest that it may be interesting to find out reasonably wide classes of trees 
or graphs, where sharp $L^p$ bounds for $\cM$ and/or $\cN$ can be proved.  Our focus will be on the class $\Upsilon_{a,b}$ of all trees~$\fT$ 
such that $3\leq a+1\leq \nu(x) \leq b+1$ for every vertex $x$ (we shall always assume that $a+1$ and $b+1$ are attained).  
Trees in $\Upsilon_{a,b}$ are said to be of \textit{$(a,b)$-bounded geometry}: they have exponential volume growth, and the counting measure~$\mu$ 
is locally, but not globally, doubling.  Notice that if $a<b$, then the group of isometries of $\fT$ may be very small, and the group theoretic techniques 
from abstract Harmonic Analysis not available (see \cite{MZ,FTN}, and the references therein, for more on the group of isometries of homogeneous trees).  
The main question is whether sharp $L^p$ bounds for $\cM$ and $\cN$ hold for all trees in $\Upsilon_{a,b}$,
and, in case this fails, which subclasses of $\Upsilon_{a,b}$ share similar $L^p$ boundedness properties for $\cM$ and $\cN$.  We set 
\begin{equation} \label{f: def tau}
\pab
:= \log_a b;
\end{equation}
note that $\pab$ is always $\geq 1$, and $\pab \leq 2$ if and only if $b\leq a^2$. Since, for $a=b$, the class $\Upsilon_{a,b}$ only contains 
the homogeneous tree $\fT_b$, where a complete description of the $L^p$ boundedness properties of $\cM$ and $\cN$ is already available, 
we shall always assume that $a<b$, i.e. $\pab>1$, unless otherwise specified. 

We prove that if $a<b\leq a^2$, equivalently if $1<\pab\leq 2$, then $\cM$ is of restricted weak type $(\pab,\pab)$, and it is bounded on $\lp{\fT}$ 
for $p>\pab$ (see Theorem~\ref{t: main ab}~\rmi).  This result is proved by estimating $\cM$ with an appropriate convolution operator on the homogeneous
tree $\fT_b$, to which the sharp form of the Kunze--Stein phenomenon (see \cite[Theorem~1]{CMS1}) applies.  

The result is optimal, in the sense that if $a<b\leq a^2$, then there exists a tree~$\fS_{a,b}$ in the class $\Upsilon_{a,b}$ such that $\cM$ is unbounded 
on $\lp{\fS_{a,b}}$ when $1\leq p <\pab$ (see Proposition~\ref{p: counterexample Stromberg}~\rmi).  The tree $\fS_{a,b}$ is described just above 
Proposition~\ref{p: counterexample Stromberg}.  Furthermore, if $q$ is a number such that $1<q< \pab$, then we can find a number $s \leq q$, 
arbitrarily close to $q$, and a tree $\fT$ in $\Upsilon_{a,b}$ such that $\cM$ is bounded on $\lp{\fT}$ if and only if $p>s$ 
(see Proposition~\ref{p: denseness}).

In view of these results, it would be reasonable to conjecture that if, instead, $b> a^2$, i.e. $\pab >2$, then $\cM$ is bounded on $L^p$
at least for some finite, large enough, $p$.  To our surprise, this turns out to be false.  Indeed, we can prove that if $\pab>2$, 
then $\cM$ is bounded on $\lp{\fS_{a,b}}$ if and only if $p =\infty$ (see Proposition~\ref{p: counterexample Stromberg}~\rmii).  

Instead of assumptions on the geometry of the tree based on pointwise bounds on the valence, as in Theorem~\ref{t: main ab}~\rmi, it may be interesting
to formulate $L^p$ boundedness results in terms of lower bounds on the volume growth of geodesic balls.  In particular, we complement 
Theorem~\ref{t: main ab}~\rmi, by showing that if $\fT$ is in~$\Upsilon_{a,b}$ and there exist 
positive constants $\al$ and $C$ such that $\bigmod{B_r(x)} \geq C\, \al^r$ for all nonnegative integers $r$ and all points
$x$ in~$\fT$, and $b\leq \al^2$, then $\cM$ is bounded on $\lp{\fT}$ for all $p> \log_\al b$ 
(see Theorem~\ref{t: main ab}~\rmii).  Notice that $\al$ must be $\geq a$, so that $\log_\al b \leq \pab$ and the additional assumption
on the lower bound on the volume growth of $\fT$ yields a wider range of $p$'s for which $\cM$ is bounded on $\lp{\fT}$.  

A related problem consists in establishing weak type $(1,1)$ boundedness results under assumptions on the volume growth of geodesic balls.
In this paper we shall draw an interesting consequence of \cite[Lemma~5.1]{NT} and \cite[Theorem~4.1]{ST}, and show that if $\bigmod{B_r(x)}$ is of the 
same order of magnitute as~$\al^r$, uniformly in $r$ and $x$, then $\cM$ is of weak type $(1,1)$ (see Theorem~\ref{t: weak11}).

We point out that the $L^p$ boundedness properties of $\cM$ on a specific tree in $\Upsilon_{a,b}$ do not solely depend on the ratio $b/a$.  
For instance, no matter how big $b/a$ is, the operator $\cM$ is of weak type $(1,1)$ 
on the semi-homogeneous tree $\fV_{a,b}$ with valences $a+1$ and $b+1$ (see Section~\ref{s: Boundedness of cM} for the definition of $\fV_{a,b}$).  
This result is a straightforward consequence of Theorem~\ref{t: weak11} mentioned above. 

Rather, the boundedness properties of $\cM$ are, in a way or another, related to the fact that  
for points $x$, $y$ in $\fT$ at distance $r$, the balls $B_r(x)$ and $B_r(y)$ may have very different volumes.  
In fact, for \lq\lq many" pairs of points~$x$ and~$y$, the ratio $\mod{B_r(x)}/\mod{B_r(y)}$ can be of the same order of magnitude as~$(b/a)^r$,
as $r$ becomes large.   This seems also related to the failure of the ``equidistant comparability condition" (\textit{ECP} for short) considered in 
\cite[Definition~2.6]{ST}.  In general, the $L^p$ boundedness of $\cM$ and $\cN$ depends in a very subtle way 
on the geometry of a given tree, or graph, and, more specifically, on the location of vertices with different valences within the tree.  

As far as the uncentred maximal function is concerned, we prove (see Theorem~\ref{t: unboundedness nc}) that as soon as $a<b$, 
then there is a tree $\fF_{a,b}$ (introduced just above Theorem~\ref{t: unboundedness nc}) in the class $\Upsilon_{a,b}$ such that 
$\cN$ is bounded on $\lp{\fF_{a,b}}$ if and only if $p=\infty$.  This is even more striking, for we prove that the centred maximal operator 
$\cM$ is of weak type $(1,1)$ on $\fF_{a,b}$ (see Theorem~\ref{t: unboundedness nc}~\rmii).   

Finally, we investigate the robustness of the results obtained so far.  A natural issue is whether the $L^p$ boundedness 
of either $\cM$ or $\cN$ on a tree~$\fT$ is preserved if we modify $\fT$, but not too much.  In Section~\ref{s: Robustness} we consider graphs $\fG$ 
that are strictly roughly isometric to trees $\fT$ in the class $\Upsilon_{a,b}$, and prove that if either $\cM$ or $\cN$ is bounded on $\lp{\fT}$, 
then the corresponding maximal operator is bounded on $\lp{\fG}$ (see Theorem~\ref{t: quasi iso}, which contains a slightly more general result).  
A strict rough isometry between $\fT$ and $\fG$ is a (possibly non-surjective) map $\vp: \fT\to \fG$, 
which approximately preserve distances (see Definition~\ref{def: RI}) and such that $\vp(\fT)$ is at controlled distance from each point of $\fG$.

Finally, it is worth recalling that homogeneous trees $\fT_b$, $b\geq 2$, are discrete counterparts of symmetric spaces of the noncompact type $X$.  
Although $\fT_b$ is not roughly isometric in the sense of Kanai to any symmetric space of the noncompact type, quite often the analysis
at infinity of many operators on~$X$ and of related operators on $\fT_b$ show similar features and can be treated by similar methods.  
The relationship between symmetric spaces and homogeneous trees appears to be even stronger by looking at Harmonic Analysis on 
noncompact semisimple Lie groups on the one hand and on the group of isometries of homogeneous trees on the other.  A deep insight into a specific
problem in one setting often gives hints to treat related problems on the other.  
In particular, the aforementioned results for $\fT_b$ are versions in the discrete case of homologous results on $X$.  
Specifically, the weak type $(1,1)$ result for the (analogous of) the maximal operator
$\cM$ is proved in \cite{Str}, the restricted weak type $(2,2)$ estimates for $\cM_2$ 
in the case where $X$ has real rank one was proved by A.~Ionescu \cite{I2}.  Since $\cM_2$ dominates $\cN$, the latter operator is also
of restricted weak type $(2,2)$, and, interpolating with the trivial bound on $\ly{X}$, bounded on 
$\lp{X}$ for $p>2$.  The $L^p$ boundedness of $\cN$ in the higher rank case for $p>2$ is proved in \cite{I1}.  

Interesting related results on a class of cusped manifolds with nonconstant curvature can be found in \cite{L1,L2,L3}.  In these papers, H.-Q.~Li exhibits 
examples of manifolds $M$ with cusps, where the range of $p$'s for which $\cM$ and $\cN$ are bounded on $L^p$ can be any interval of the form 
$(p_0,\infty)$, or $(2p_0,\infty)$, respectively, where $p_0$ is a number in $[1,\infty)$, depending on certain parameters related to 
the geometry of $M$.
Our results are reminiscent of his. However, it is worth observing that, except for a few special cases, the metric measure spaces considered 
by H.-Q.~Li have, in a way or another, unbounded geometry.  

The paper is organised as follows.  Section~\ref{s: Preliminaries} is devoted to background material and preliminary estimates.  The centred and the 
uncentred maximal operators on trees in the class $\Upsilon_{a,b}$ are studied in Sections~\ref{s: Boundedness of cM} and~\ref{s: Unboundedness nc}, 
respectively.  Finally, Section~\ref{s: Robustness} is devoted to the proof that strict rough isometries between graphs satisfying mild additional 
assumptions preserve the $L^p$ boundedness of the centred and the uncentred maximal operators.

We use the ``variable constants convention'', and denote by~$C$ a constant whose actual value may vary from place to place and which
might depend on factors quantified before its occurrence, but not on factors quantified after.

\section{Preliminaries}
\label{s: Preliminaries}

Throughout this paper $\fT$ will denote a \textit{tree}, i.e., a connected graph with no loops.  We identify a tree with the set of its vertices, 
and declare that two points on $\fT$ are \textit{neighbours} if and only if they are connected by an edge.  We shall denote 
by $\nu(x)$ the \textit{valence} of the vertex $x$, i.e. the number of neighbours of~$x$.  We shall always assume that $2\leq \nu(x) < \infty$.  

The tree~$\fT$ carries a natural \textit{distance}~$d$: for each pair of vertices~$x$ and~$y$ we denote by $d(x,y)$~the number of edges between 
$x$ and $y$. 
It will be often convenient to consider a fixed but arbitrary reference point~$o$ in~${\fT}$; then we shall write~$\mod{x}$ for~$d(x,o)$.
For each $x$ in $\fT$ and each nonnegative integer $r$, the sphere $S_r(x)$ with centre $x$ and radius $r$ is defined as follows
$$
S_r(x)
:= \{y\in \fT: d(x,y) =r \}.  
$$
Then $\ds B_r(x) = \bigcup_{j=0}^r \, S_j(x)$ is the (closed) ball with centre $x$ and radius $r$.

Between any two points $x$ and $y$ in $\fT$, such that $d(x,y)=n$, there is a unique \textit{geodesic path} of the form
$x_0,x_1,\ldots,x_n$, where $x_0=x$, $x_n=y$, and $d(x_i,x_j)=\mod{i-j}$ whenever $0\leq i,j \leq n$.
A \textit{geodesic ray} $\ga$ in~$\fT$ is a one-sided sequence $\{\ga_n:n\in\BN\}$ of points of~$\fT$ such that
$d(\ga_i,\ga_j)=\mod{i-j}$ for all nonnegative integers $i$ and $j$.  We say that {\it $x$ lies on~$\ga$}, and write $x\in \ga$, if
$x=\ga_n$ for some $n$ in~$\BN$. Geodesic rays $\{\ga_n : n\in \BN\}$ and $\{\ga'_n : n\in \BN\}$ are identified if there exist integers
$i$ and $j$ such that $\ga_n=\ga'_{n+i}$ for all $n$ greater than $j$. This identification is an equivalence relation. 
In every equivalence class, there is a unique geodesic ray 
starting at $o$. 


We describe the \textit{horocycles} in~$\fT$.  Given a geodesic ray $\om$, we define the height function~$h_\om$ by the formula
$$
h_\om(x) = \lim_{m\to \infty} \big(m - d(x,\om_m)\big);
$$
$h_\om$ is the discrete analogue of the Busemann function in Riemannian geometry.  Then the \textit{$\om$-horocycles} are the level
sets of the function~$h_\om$; for $h$ in~$\BZ$ we set
$$
\fH_h^\om
:= \big\{x\in \fT : h_\om(x)=h\big\},
$$
and $\fT$ decomposes disjointly:
$$
\fT = \bigcup_{h\in \BZ} \, \fH_h^\om.
$$
If $\om$ and $\om'$ are equivalent, i.e., for some integers $i$ and $j$, $\om_n=\om'_{n+i}$ when $n\geq j$, then
$$
\begin{aligned}
h_\om(x)
& = \lim_{m\to \infty} \big(m-d(x,\om_m)\big) \\
& = \lim_{m\to \infty}\big( (m+i) - d(x,\om'_{m+i})\big) -i \\
& = h_{\om'}(x) - i, 
\end{aligned}
$$
and it follows that $\fH_h^\om = \fH_{h+i}^{\om'}$.  In particular, equivalent geodesic rays determine the same family of horocycles,
and the same horocyclical decomposition of $\fT$, even though the indices depend on the choice of the representative in the
equivalence class. 

\textit{Caveat}:
given a tree $\fT$, we implicitly assume that we have chosen a reference point~$o$ and a geodesic ray $\om$ in $\fT$, and consider the 
associated horocyclic decomposition of $\fT$.  For notational convenience we shall denote simply by~$h$ the height function 
relative to $\om$ and by $\fH_j$ the corresponding horocycles.   

Notice that if $x\in \fH_j$, then $\nu(x)-1$ neighbours of $x$, called \textit{successors} of~$x$, 
belong to $\fH_{j-1}$, and one neighbour, called \textit{predecessor} of $x$, lies on~$\fH_{j+1}$.
We denote by $p(x)$ and $s(x)$ the \textit{predecessor} of~$x$ and the set of \textit{successors} of~$x$, respectively.
Notice that $p(x)$ and $s(x)$ depend on the choice of $\om$: in order to simplify the notation, we do not stress this dependence.  

Note that $p\big(p(x)\big)$, also denoted $p^2(x)$, is just 
a vertex in $\fH_{j+2}$.  The~$k^{\mathrm{th}}$ \textit{ancestor} of $x$ is the point $p^k(x) := p\big(p^{k-1}(x)\big)$.  
Sometimes we shall also write $p^0(x)$ in place of $x$.  

Set $s^0(x) := \{x\}$ and $s^1(x) := s(x)$.  If $k\geq 2$, then we define $s^k(x)$ iteratively:  
$$
s^k(x) 
:= \bigcup_{y\in s^{k-1}(x)} \, s(y).
$$   
We shall frequently need to estimate $\bigmod{S_r(x)}$ for various $x\in \fT$.  

\begin{lemma} \label{l: cardinality of spheres}
The following decomposition holds
$$
	S_r(x) 
	= \bigcup_{k=0}^{r} \, \big[S_r(x) \cap \fH_{h(x) + 2k-r}\big].
$$
Furthermore
$S_r(x) \cap \fH_{h(x)-r} = s^{r}(x)$, $S_r(x) \cap \fH_{h(x)+r} = \big\{p^{r}(x)\big\}$ and, if $1\leq k\leq r-1$, then 
$$
S_r(x) \cap \fH_{h(x) + 2k-r} = s^{r-k}\big(p^{k}(x)\big) \setminus s^{r-k-1}\big(p^{k-1}(x)\big).
$$  
Therefore
$$
\bigmod{S_r(x)} 
	= \bigmod{s^r(x)} + \sum_{k=1}^{r-1} \,\sum_{{y\in s(p^k(x))\atop y\neq p^{k-1}(x)}} \, \bigmod{s^{r-k-1}(y)} + 1.   
$$
\end{lemma}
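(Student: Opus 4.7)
The proof will proceed by classifying each vertex $y \in S_r(x)$ according to the highest point on the unique geodesic joining $x$ to $y$. Since $\fT$ is a tree, every such geodesic decomposes uniquely as an ascending segment $x, p(x), \ldots, p^k(x)$ for some $0 \le k \le r$, followed by a descending segment of length $r - k$ ending at $y$. Consequently $y \in s^{r-k}(p^k(x))$ and
$$
h(y) = h(x) + k - (r-k) = h(x) + 2k - r,
$$
which immediately yields the claimed disjoint decomposition of $S_r(x)$ indexed by $k = 0, 1, \ldots, r$.

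Next I would identify the $k$-th piece explicitly. The extreme cases $k = 0$ and $k = r$ correspond to $s^r(x)$ and $\{p^r(x)\}$ respectively, straight from the description of the geodesic. For $1 \le k \le r-1$, a point $y \in s^{r-k}(p^k(x))$ belongs to $S_r(x)$ exactly when the ascending portion of the geodesic truly reaches $p^k(x)$; equivalently, $y$ does \emph{not} already lie below $p^{k-1}(x)$, for otherwise the geodesic from $x$ to $y$ would turn around at $p^{k-1}(x)$ (or lower) and have length strictly less than $r$. Since $p^{k-1}(x) \in s(p^k(x))$, one has $s^{r-k-1}(p^{k-1}(x)) \subseteq s^{r-k}(p^k(x))$, whence
$$
S_r(x) \cap \fH_{h(x)+2k-r} = s^{r-k}\big(p^k(x)\big) \setminus s^{r-k-1}\big(p^{k-1}(x)\big),
$$
as required.

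To derive the final cardinality formula, I would partition $s^{r-k}(p^k(x))$ by the first step taken from $p^k(x)$:
$$
s^{r-k}\big(p^k(x)\big) = \bigcup_{y \in s(p^k(x))} s^{r-k-1}(y),
$$
a disjoint union, since distinct children of $p^k(x)$ generate disjoint rooted subtrees. Removing the single term corresponding to $y = p^{k-1}(x)$ yields precisely the rewriting of $S_r(x) \cap \fH_{h(x)+2k-r}$ in terms of siblings of $p^{k-1}(x)$. Summing the cardinalities over $k = 0, 1, \ldots, r$, with the contribution $\bigmod{s^r(x)}$ from $k = 0$ and $1$ from $k = r$, produces the stated identity for $\bigmod{S_r(x)}$.

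There is no substantial obstacle here: the argument is essentially careful bookkeeping about geodesics in a tree and the unique ancestor path from $x$. The only delicate point is to treat the boundary values $k = 0$ and $k = r$ coherently with the intermediate indices, which is precisely why these two cases are singled out in the statement of the lemma.
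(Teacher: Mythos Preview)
Your proof is correct and follows essentially the same approach as the paper: both classify $y\in S_r(x)$ by the turning point $p^k(x)$ of the geodesic $[x,y]$ (you call it the ``highest point,'' the paper describes it as the endpoint of $[x,y]\cap[x,\om)$), deduce $h(y)=h(x)+2k-r$, and then identify the intermediate slices by excluding the descendants of $p^{k-1}(x)$. The derivation of the cardinality formula via the disjoint decomposition $s^{r-k}(p^k(x))=\bigcup_{y\in s(p^k(x))} s^{r-k-1}(y)$ is likewise the same.
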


\begin{proof} 
Suppose that $y$ is a point in $S_r(x)$.  Then the intersection of the geodesics $[x,y]$ and $[x,\om)$ is of the form $[x,p^{k}(x)]$ 
for some integer $k$ between~$0$ and $r$.  
Loosely speaking, we can reach $y$ first moving towards $\om$ by $k$ steps along the infinite chain $[x,\om)$, arriving at the point $p^k(x)$, 
and then moving down $r-k$ steps along a geodesic starting at $p^k(x)$.

Therefore $h(y) = h\big(p^k(x)\big) - (r-k) = h(x) + 2k-r$.  This proves the first decomposition.  

Next, observe that the horocycle $\fH_{h(x)-r}$ can be reached in $r$ steps from $x$ only moving down along a geodesic, and 
all the points in $s^r(x)$ can be reached.  Thus, $S_r(x) \cap \fH_{h(x)-r} = s^{r}(x)$, as required.  
Note also that the only point on the horocycle $\fH_{h(x)+r}$ at distance $r$ from $x$ is $p^r(x)$.

Now suppose that $1\leq k\leq r-1$.  
Notice that $S_r(x) \cap \fH_{h(x) + 2k-r}$ is the set of all points in $s^{r-k}\big(p^k(x)\big)$ that are not $(r-k-1)$-successors of $p^{k-1}(x)$.
Therefore if~$v$ belongs to $S_r(x) \cap \fH_{h(x) + 2k-r}$, then it must be a successor of generation $r-k-1$ of a point in
$s\big(p^k(x)\big)\setminus \big\{p^{k-1}(x)\big\}$.
The required formula follows directly from this. 
\end{proof}

In the sequel, $a$ and $b$ will always denote positive integers. 
Suppose that $2\leq a \leq b$.  We say that $\fT$ has \textit{$(a,b)$-bounded geometry} if $a+1 \leq \nu(x) \leq b+1$ for every $x$ in $\fT$,
and the valence function attains the values $a+1$ and $b+1$. 
In the case where $a=b$, we say that $\fT$ is a \textit{$b$-homogeneous tree of degree $b+1$}, and denote it by $\fT_b$ in the sequel; 
each vertex in $\fT_b$ has exactly $b+1$ neighbours.

It is convenient to define the functions $S^b, V^b: \BN \to [1,\infty)$ by 
\begin{equation} \label{f: volume profile}
	V^b(r) 
	:= \begin{cases}
		1                              & \hbox{if $r=0$} \\
		\ds\frac{b^{r+1}+b^r-2}{b-1}   & \hbox{if $r\geq 1$},
	\end{cases}
\quad 
	S^b(r) 
	:= \begin{cases}
		1                 & \hbox{if $r=0$} \\
		(b+1) \, b^{r-1}  & \hbox{if $r\geq 1$}.
	\end{cases}
\end{equation} 
Denote by $B_r^b(x)$ and $S_r^b(x)$ the ball and the sphere with centre $x$ and radius $r$ in $\fT_b$.
Note that $\bigmod{B_r^b(x)} = V^b(r)$ and $\bigmod{S_r^b(x)} = S^b(r)$ for every nonnegative integer~$r$ and for every $x$ in $\fT_b$.

We say that a tree $\fT$ possesses the \textit{Cheeger isoperimetric property} if there exists a positive constant $c$ such that 
$\bigmod{\partial E} \geq c \, \bigmod{E}$ for every subset $E$ of~$\fT$, where $\partial E$ denotes the boundary of $E$, i.e. the set of all
points in $E$ with at least a neighbour in $\fT \setminus E$.
The largest constant $c$ for which this inequality holds for all $E$ is called the \textit{Cheeger isoperimetric constant} of $\fT$,
and is denoted by $c_\fT$ in the sequel.

\begin{proposition} \label{p: Cheeger}
	Suppose that $2\leq a \leq b$ and $\fT$ is a tree in the class $\Upsilon_{a,b}$.  Then $\fT$ possesses the Cheeger isoperimetric property.
\end{proposition}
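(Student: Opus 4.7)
The plan is to exploit the tree structure via a double-counting argument on edges crossing the boundary of a finite subset $E$. The key observation is that, since $\fT$ is a tree, the subgraph it induces on any $E$ is forced to be sparse (a forest), while the hypothesis $\nu(x)\geq a+1\geq 3$ forces each vertex to have many neighbours; the tension between these two facts will produce plenty of edges leaving $E$.

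The first step is to count the edges inside $E$. Let $E$ be a finite nonempty subset of $\fT$ and let $c(E)$ denote the number of connected components of the induced subgraph. Since this subgraph is a forest, it contains exactly $\mod{E}-c(E)$ edges, and hence the sum over $x\in E$ of the number of neighbours of $x$ that lie in $E$ equals $2\bigl(\mod{E}-c(E)\bigr)$.

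The second step is to count boundary edges---those joining $E$ to $\fT\setminus E$---from two viewpoints. Counted from the interior, using $\nu(x)\geq a+1$, the number of boundary edges equals
$$
\sum_{x\in E}\nu(x) \,-\, 2\bigl(\mod{E}-c(E)\bigr)\;\geq\;(a-1)\mod{E}+2c(E)\;\geq\;(a-1)\mod{E}.
$$
Counted from the boundary, every boundary edge has its endpoint in $E$ lying in $\partial E$, and each $y\in\partial E$ can be the endpoint of at most $\nu(y)\leq b+1$ such edges, so the total is at most $(b+1)\mod{\partial E}$. Combining these two estimates yields
$$
\mod{\partial E}\;\geq\;\frac{a-1}{b+1}\,\mod{E},
$$
so $c_\fT\geq (a-1)/(b+1)$, which is strictly positive since $a\geq 2$.

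There is no serious obstacle to this plan: it relies only on the fact that $\fT$ is a tree (forcing the induced subgraph to be a forest) and on the uniform valence bounds defining $\Upsilon_{a,b}$. The hypothesis $a\geq 2$ is where all the weight lies---without it, the lower bound on boundary edges degenerates to zero, which is consistent with the well-known failure of any Cheeger-type inequality for trees containing an infinite geodesic line of valence-two vertices, whose long initial segments have boundary of bounded cardinality.
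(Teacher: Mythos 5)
Your argument is correct and complete. Note, however, that the paper does not actually prove Proposition~\ref{p: Cheeger}: it simply cites a direct proof in \cite[Lemma~13]{RT} and an alternative derivation from Wojciechowski's thesis \cite{Wo}. What you have supplied is a self-contained, elementary replacement for that citation: the forest structure of the subgraph induced on a finite nonempty $E$ gives exactly $\mod{E}-c(E)$ internal edges, so the number of edges leaving $E$ is $\sum_{x\in E}\nu(x)-2\bigl(\mod{E}-c(E)\bigr)\geq (a-1)\mod{E}$, while counting the same edges from their endpoints in $\partial E$ bounds this quantity by $(b+1)\mod{\partial E}$; hence $c_\fT\geq (a-1)/(b+1)>0$ precisely because $a\geq 2$. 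This is in the same combinatorial spirit as the Rochberg--Taibleson lemma, and your explicit constant $(a-1)/(b+1)$ is a small bonus the paper never records. Two cosmetic remarks: the paper's definition of the Cheeger property quantifies over ``every subset $E$'', which can only be meant for finite nonempty proper subsets (for $E=\fT$ the inequality is vacuously false), so your restriction to finite $E$ is the right reading rather than a gap; and your closing observation that the bound degenerates when $a=1$ correctly identifies why the hypothesis $a\geq 2$ is essential.
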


\begin{proof}
A direct proof can be found in \cite[Lemma~13]{RT}.  
The result is also a straightforward consequence of the analysis performed by R.K.~Wojciechowski in 
his thesis~\cite[Theorem~4.2.2 and Example~4.2.3]{Wo}.
\end{proof}




In this paper the \textit{modified centred maximal operator with parameter $\si>0$}, defined on a tree $\fT$ by 
\begin{equation} \label{f: modified si}
\cM_\si f(x)
:= \sup_{r\in \BN} \, \frac{1}{\mod{B_r(x)}^{1/\si}} \, \int_{B_r(x)} \, \mod{f} \wrt \mu,
\end{equation} 
will play an auxiliary, albeit important, role.  Notice that $\cM_1 = \cM$.  

We refer the reader to \cite[Chapter~V]{SW} for the basic definitions concerning Lorentz spaces.  
Given a positive number $\al$ and a function $f$ on a simple graph $\fG$ (i.e. an undirected graph without self-loops and multi-edges),
we set $E_f(\al) := \bigmod{\{x\in \fG: \bigmod{f(x)} > \al\}}$.
Recall that the $\lorentz{p}{r}{\fG}$ (quasi-) norm may be defined as follows
$$
\bignorm{f}{\lorentz{p}{r}{\fG}} 
= \Big( p \ioty \bigmod{E_f(\al)}^{r/p} \, \al^{r-1} \wrt \al\Big)^{1/r}
$$
if $1\leq r<\infty$, and $\ds\bignorm{f}{\lorentz{p}{\infty}{\fG}} = \sup_{\al>0} \, \al \, \bigmod{E_f(\al)}^{1/p}$.
We shall primarily be concerned with $\lorentz{p}{1}{\fG}$ and $\lorentz{p}{\infty}{\fG}$.
Recall that an operator is \textit{of weak type $(p,p)$} if it is bounded from $\lp{\fG}$
to $\lorentz{p}{\infty}{\fG}$, and, in the case where $1<p<\infty$, that an operator is \textit{of restricted weak type $(p,p)$} if it is bounded from 
$\lorentz{p}{1}{\fG}$ to $\lorentz{p}{\infty}{\fG}$.

\section{Boundedness of $\cM$ on trees with bounded geometry}
\label{s: Boundedness of cM}

It is straightforward to produce examples of trees with the property that~$\cM$ is bounded on $\lp{\fT}$ if and only if $p=\infty$.  
Consider, for instance, the tree $\fT$, characterised by the following property: there is a geodesic ray $\ga := [x_0,x_1,x_2,\ldots]$ in $\fT$ such that 
$\nu(x) = 3$ for every $x\notin \ga$, and $\nu(x_j) = j+3$ for $j=0,1,2,\ldots$.  We consider the horocyclic foliation induced by $\ga$.  Observe that  
\begin{equation*}
\cM\de_{x_j}(x)
=\sup_{r\in\BN} \, \frac{1}{|B_r(x)|}\, \sum_{y\in B_r(x)} \de_{x_j}(y)
=\frac{1}{|B_{d(x,x_j)}(x)|}
\quant  x\in \fT.
\end{equation*}
In particular, if $j\geq 1$ and $x$ is a successor of $x_j$ different from $x_{j-1}$, then 
$\cM\de_{x_j}(x)=|B_1(x)|^{-1} = (\nu(x)+1)^{-1} = 1/4$. Thus, if $p\in [1,\infty)$, then 
\begin{equation*}
	\bignormto{\cM\de_{x_j}}{p}{p}
	\geq \sum_{{x\in s(x_j)}\atop{x\neq x_{j-1}}} \bigmod{\cM\de_{x_j}(x)}^p
	= \frac{j+1}{4^p},
\end{equation*}
which diverges as $j$ tends to infinity.  Since $\norm{\de_{x_j}}{p} = 1$, the operator $\cM$ is unbounded on $\lp{\fT}$.
\textit{A fortiori}, $\cM$ is not of weak type $(1,1)$, for otherwise, interpolating with the trivial bound on $\ly{\fT}$, 
$\cM$ would be bounded on $\lp{\fT}$, $p>1$.

We emphasise that the tree $\fT$ considered in this example has unbounded geometry, in the sense that $\ds\sup_{x\in \fT} \, \nu(x) = \infty$, 
i.e. geodesic balls of radius $1$ can have arbitrarily large mass.  Therefore the counting measure on $\fT$ is not locally doubling.  

In the sequel we shall consider only \textit{locally doubling trees $\fT$ with exponential growth}.  Notice that 
there are examples of such trees in the class $\Upsilon_{a,b}$, for every $a\geq 1$.  
In fact, in this section we shall assume that $a\geq 2$.

We need more terminology.  Fix a reference point $o_b$ in the homogeneous tree $\fT_b$.  For each vertex $y\neq o_b$, consider the set 
\begin{equation*}
E(y)
:= \{ v\in \fT_b: \mod{v} \geq \mod{y} \, \, \hbox{and $y$ lies on the geodesic joining $o_b$ and $v$}\}.
\end{equation*}

\begin{remark}
Observe that a tree $\fT$ in the class $\Upsilon_{a,b}$ is isometric, although not canonically, to a subtree $\wt\fT$ of $\fT_b$.  

To see this, fix reference points $o_b$ in $\fT_b$ and~$o$ in $\fT$.  
Label the neighbours of $o_b$ by $y_1^b,\ldots,y_{b+1}^b$, and the neighbours of $o$ in $\fT$ by $y_1,\ldots,y_{\nu(o)}$.  
If $\nu(o) = b+1$, then we do nothing.  If, instead, $\nu(o) \leq b$, then 
we remove from $\fT_b$ the sets $E(y_j^b)$, $j=\nu(o)+1,\ldots, b+1$.  
The map $\cJ_1$ that associates $o_b$ to $o$, and $y_j^b$ to $y_j$, $j=1,\ldots, \nu(o)$, is an isometry between 
the ball $B_1(o)$ of $\fT$ and the finite subtree $\wt\fT_b^1$ of $\fT_b$ consisting of the vertices
$\{o_b, y_1^b,\ldots,y_{\nu(o)}^b\}$.

We iterate this procedure: assume that we have extended $\cJ_1$ to an isometry $\cJ_k$ between the ball $B_k(o)$ in $\fT$ and a subtree $\wt\fT_b^k$
of $\fT_b$ which contains only vertices at distance at most $k$ from $o_b$.  Suppose that $x$ is one of vertices in $\fT$ at distance $k$ from $o$, 
and let $z_1,\ldots, z_{\nu(x)-1}$ be its neighbours at distance $k+1$ from $o$.  Consider $\cJ_k(x)$, which is a point in $\fT_b$ at distance~$k$ 
from $o_b$, and label its neighbours at distance $k+1$ from $o_b$ by $z_1^b,\ldots, z_{b}^b$.  
Remove from $\fT_b$ the branches $E(z_j^b)$, $j=\nu(x),\ldots, b$.  Then define $\cJ_{k+1} (z_j) = z_j^b$, $j=1,\ldots, \nu(x)-1$.

Repeat this procedure for all points in $S_k(o)$, and denote by 
$\wt\fT_b^{k+1}$ the subtree of $\fT_b$ which contains $\wt\fT_b^k$, and all the points in $\cJ_{k+1}\big(S_{k+1}(o)\big)$.
The map $\cJ_{k+1}$, whose restriction to $B_k(o)$ agrees with $\cJ_k$ and is defined on $S_{k+1}(o)$ as explained above is an
isometry between $B_{k+1}(o)$ and $\wt\fT_b^{k+1}$.   

Finally, define 
$$
\wt\fT 
:= \bigcup_{k=1}^\infty \wt\fT_b^k, 
$$ 
and $\cJ\!\!: \fT \to \wt \fT$ by $\cJ(x) = \cJ_k(x)$ if $x$ is at distance at most $k$ from $o$.  
It is straightforward to check that $\cJ$ is an isometry between $\wt\fT$ and $\fT$.  
\end{remark}

Recall the definition of $\pab$ (see \eqref{f: def tau}).

\begin{theorem} \label{t: main ab}
Suppose that $\fT$ is a tree in $\Upsilon_{a,b}$.  The following hold:
	\begin{enumerate} 
		\item[\itemno1]
			if $a\leq b\leq a^2$, then the operator $\cM$ is of restricted weak type $(\pab,\pab)$ and it is bounded on $\lp{\fT}$  
			for every $p \in (\pab,\infty]$;
			\smallskip
		\item[\itemno2]
			if there exist positive constants $\al$ and $C$ such that $\bigmod{B_r(x)} \geq C \, \al^r$ for all vertices $x$ and nonnegative
			integers $r$, and $b\leq \al^2$, then the operator $\cM$ is of restricted weak type $(p_{\al,b},p_{\al,b})$,
			where $p_{\al,b} := \log_\al b$, and it is bounded on $\lp{\fT}$  
			for every $p$ in $(p_{\al,b},\infty]$.
	\end{enumerate}
\end{theorem}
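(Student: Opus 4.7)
The plan is to reduce the centred maximal operator $\cM$ on $\fT$ to a convolution operator on the homogeneous tree $\fT_b$ with an explicit radial kernel, and then to invoke the sharp form of the Kunze--Stein phenomenon on $\fT_b$ established in \cite[Theorem~1]{CMS1}. Parts \rmi\ and \rmii\ admit essentially the same proof, since \rmi\ is the particular case $\alpha=a$ of \rmii: every vertex of a tree in $\Upsilon_{a,b}$ has at least $a+1$ neighbours, so $\bigmod{B_r(x)}\geq V^a(r)\geq C\,a^r$ for every $x$ and every nonnegative integer $r$. In what follows I therefore work under the assumptions of \rmii, writing $p_*:=\log_\alpha b$.

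By the isometric embedding $\cJ\colon\fT\hookrightarrow\fT_b$ constructed in the remark preceding the theorem, extending $f$ by zero produces a function $\wt f$ on $\fT_b$ with $\norm{\wt f}{\lp{\fT_b}}=\norm{f}{\lp{\fT}}$, and, since $\cJ$ preserves distances, $B_r(x)\subseteq B_r^b(\cJ x)$ for every $x$ and $r$. Combining this with the volume lower bound yields
$$
\cM f(x)\;\leq\;\sup_{r\in\BN}\frac{1}{C\alpha^r}\sum_{y\in B_r^b(\cJ x)}\bigmod{\wt f(y)}\;\leq\;\sum_{r=0}^{\infty}\frac{1}{C\alpha^r}\sum_{y\in B_r^b(\cJ x)}\bigmod{\wt f(y)}\;=\;C'\,\bigl(\bigmod{\wt f}\ast K_\alpha\bigr)(\cJ x),
$$
where $K_\alpha(y):=\alpha^{-|y|}$ and $(g\ast k)(z):=\sum_{y\in\fT_b}g(y)k(d(z,y))$ denotes the standard radial-kernel convolution on $\fT_b$. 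A short calculation using $\bigmod{S_r^b(x)}=S^b(r)\leq C\,b^r$ shows that $K_\alpha\in L^{p_*,\infty}(\fT_b)$, and more generally that $K_\alpha\in\lp{\fT_b}$ for every $p>p_*$.

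The hypothesis $b\leq\alpha^2$ is precisely the condition $p_*\leq 2$, which is the range in which the Kunze--Stein phenomenon is available on $\fT_b$. Invoking the sharp Lorentz-space form \cite[Theorem~1]{CMS1} at the exponent $p_*$ yields that $g\mapsto g\ast K_\alpha$ maps $L^{p_*,1}(\fT_b)$ boundedly into $L^{p_*,\infty}(\fT_b)$; transferring this estimate to $\cM$ on $\fT$ through the isometric embedding (which preserves the $L^{p_*,1}$-quasinorm of $\wt f$ and, upon restriction to $\cJ(\fT)$, cannot increase $L^{p_*,\infty}$-quasinorms) gives the restricted weak type $(p_*,p_*)$ bound for $\cM$. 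Marcinkiewicz interpolation of this bound against the trivial strong $L^\infty$ bound for $\cM$ then produces its strong-type boundedness on $\lp{\fT}$ for every $p\in(p_*,\infty]$.

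The main obstacle lies at the endpoint $p=p_*$: for any $p$ strictly between $p_*$ and $2$ one could appeal to a strong-type Kunze--Stein inequality directly, since $K_\alpha\in\lp{\fT_b}$, and then cover $p>2$ by interpolation with $L^\infty$; but at $p=p_*$ the kernel $K_\alpha$ just barely fails to lie in $L^{p_*}(\fT_b)$ and sits only in the weak Lorentz space $L^{p_*,\infty}(\fT_b)$, so the full Lorentz-space refinement of Kunze--Stein in \cite[Theorem~1]{CMS1} is indispensable. The borderline case $p_*=2$ (equivalently $b=\alpha^2$) is particularly delicate, as $p_*$ then coincides with the critical index beyond which Kunze--Stein on $\fT_b$ is no longer available.
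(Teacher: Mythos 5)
Your reduction of $\cM$ to a convolution on $\fT_b$ with the radial kernel $K_\alpha(y)=\alpha^{-|y|}$, followed by the sharp Kunze--Stein inequality and interpolation with $L^\infty$, is essentially the paper's argument, but only in the strict case $b<\alpha^2$, i.e.\ $p_*<2$ (the paper first dominates $\cM$ by the modified maximal operator $\cM^b_{p_*}$ and then by convolution with $\Phi=V^b(|\cdot|)^{-1/p_*}\approx\alpha^{-|\cdot|}$; your summation over $r$ produces the same kernel up to constants). The genuine gap is at the endpoint $b=\alpha^2$, i.e.\ $p_*=2$ --- which is exactly the case needed to finish part \rmi\ when $b=a^2$. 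The inequality you invoke there, namely $\bignorm{g*K_\alpha}{\lorentz{2}{\infty}{\fT_b}}\lesssim \bignorm{K_\alpha}{\lorentz{2}{\infty}{\fT_b}}\,\bignorm{g}{\lorentz{2}{1}{\fT_b}}$, is not what the sharp Kunze--Stein theorem of \cite{CMS2} provides: the form $\lorentz{p}{1}{\fT_b}*\lorentz{p}{\infty}{\fT_b}\subseteq\lorentz{p}{\infty}{\fT_b}$ holds for $1<p<2$ only, and the surviving statement at $p=2$ is $\lorentz{2}{1}{\fT_b}*\lorentz{2}{1}{\fT_b}\subseteq\lorentz{2}{\infty}{\fT_b}$, which does not apply because $K_\alpha=b^{-|\cdot|/2}$ lies in $\lorentz{2}{\infty}{\fT_b}$ but not in $\lorentz{2}{1}{\fT_b}$ (its $\lorentz{2}{1}{\fT_b}$ quasi-norm diverges like $\sum_m b^{m/2}\alpha^{-m}=\sum_m 1$). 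You flag this borderline case as ``particularly delicate'' but then assert the bound anyway; as written, the case $b=\alpha^2$ is unproved.

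The paper closes this case by a different ingredient: it keeps the supremum over $r$ (rather than summing), so that $\cM f\leq C\,\cM_2^b f^\sharp$ with $\cM_2^b$ the modified \emph{maximal} operator on $\fT_b$, and then invokes Veca's theorem \cite[Theorem~5.1]{V} that $\cM_2^b$ is of restricted weak type $(2,2)$ --- a maximal-function result that does not follow from the convolution inequality you use. Note that your replacement of $\sup_r$ by $\sum_r$ is harmless when $p_*<2$ but is precisely where the endpoint is lost: at $p_*=2$ the sum over $r$ can exceed the supremum by an unbounded (logarithmic in the radius) factor on suitable test functions. To repair the proof, treat $b=\alpha^2$ separately via Veca's theorem, retaining the supremum; the rest of your argument (the embedding into $\fT_b$, the transfer of Lorentz quasi-norms, and the interpolation giving $L^p$ boundedness for $p>p_*$) is sound and matches the paper.
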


\begin{proof} 
Since the result for homogeneous trees is well known (see \cite[Theorem~1.5]{NT} and \cite[Theorem~3.1]{CMS1}), we can assume that $a<b$.

First we prove \rmi.  
Clearly $\cM$ is bounded on $\ly{\fT}$, so that the full result follows by interpolating between this and the endpoint result for $\pab$.
Since $\fT$ is isometric to a subtree of $\fT_b$ (see above), we can assume that~$\fT$ is actually a subtree of $\fT_b$.
Hence any function $f$ on $\fT$ can be trivially extended to a function~$f^\sharp$ on $\fT_b$ as follows
$$
	f^\sharp (x) 
	:= f(x) \, \One_\fT (x) 
	\quant x \in \fT_b.
$$
Notice that $\bignorm{f}{\lorentz{p}{q}{\fT}} = \bignorm{f^\sharp}{\lorentz{p}{q}{\fT_b}}$ for all indices $p$ and $q$ such that $1\leq p,q\leq \infty$.

Observe that each vertex $x$ in $\fT$ has, at least, $a+1$ neighbours, so that $\mod{B_r(x)} \geq V^a(r)$ (see \eqref{f: volume profile},
with $a$ in place of $b$, for the definition of $V^a$).  Also notice that 
\begin{equation} \label{f: Cab}
	V^a(r)
	\geq C_{a,b}\,\, V^b(r)^{1/\pab}
	\quant r \in \BN,
\end{equation}
where $\ds C_{a,b} = \Big(\frac{b-1}{b+1}\Big)^{1/\pab}\, \Big(1+\frac{2}{a}\Big)$.  Indeed, the inequality above holds for $r=0$.  
Recalling the formulae for $V^a$ and $V^b$ (see \eqref{f: volume profile}), it suffices to estimate 
\begin{equation} \label{f: intermediate Cab}
\inf_{r\geq 1}\, \frac{(b-1)^{1/{\pab}}}{a-1} \, \frac{a^{r+1}+a^r-2}{(b^{r+1}+b^r-2)^{1/\pab}}. 
\end{equation}
Observe that $b^{1/\pab} = a$ and the function $r \mapsto a+1 - (2/a^r)$ is increasing on $[1,\infty)$.  Therefore
$$
\frac{a^{r+1}+a^r-2}{(b^{r+1}+b^r-2)^{1/\pab}} 
	= \frac{a+1-(2/a^r)}{\big(b+1-(2/b^r)\big)^{1/\pab}}  
	\geq \frac{a+1-(2/a)}{(b+1)^{1/\pab}}. 
$$
By combining this and \eqref{f: intermediate Cab}, we obtain the desired estimate \eqref{f: Cab}.  Hence 
\begin{equation} \label{f: powers Va and Vb}
	\mod{B_r(x)} 
	\geq C_{a,b}\,\, V^b(r)^{1/\pab} 
	\quant r \in \BN.  
\end{equation}
Then 
$$
	\begin{aligned}
	\cM f(x) 
		& =    \sup_{r\in \BN} \, \frac{1}{\mod{B_r(x)}} \, \int_{B_r(x)} \, \mod{f(y)} \wrt \mu(y)\\
		& \leq C_{a,b}^{-1} \, \sup_{r\in \BN} \, \frac{1}{V^b(r)^{1/\pab}} \, \int_{B_r^b(x)} \, \mod{f^\sharp(y)} \wrt \mu(y);
	\end{aligned}
$$
the last inequality follows from \eqref{f: powers Va and Vb} and the trivial fact that  
$\ds \int_{B_r(x)} \, \mod{f(y)} \wrt \mu(y) = \int_{B_r^b(x)} \, \mod{f^\sharp(y)} \wrt \mu(y)$ (for $f^\sharp$ vanishes on $\fT_b\setminus \fT$).  
Denote by $\cM_\si^b$ the modified centred maximal function (with parameter $\si$) on $\fT_b$.  Altogether, we have proved that 
\begin{equation} \label{f: centred M dominated modified M}
	\cM f (x) 	 
	\leq C_{a,b}^{-1} \,\, \cM_{\pab}^b f^\sharp (x) 
	\quant x \in \fT.   
\end{equation}

If $b=a^2$, then $\pab = 2$, and
$$
	\begin{aligned}
	C_{a,b} \bignorm{\cM f}{\lorentz{2}{\infty}{\fT}}
		& \leq  \bignorm{\cM_{2}^b f^\sharp}{\lorentz{2}{\infty}{\fT}} \\
		& \leq  \bignorm{\cM_{2}^b f^\sharp}{\lorentz{2}{\infty}{\fT_b}} \\
		& \leq  \bigopnorm{\cM_{2}^b}{\lorentz{2}{1}{\fT_b};\lorentz{2}{\infty}{\fT_b}} \, \bignorm{f^\sharp}{\lorentz{2}{1}{\fT_b}} \\
		& =     \bigopnorm{\cM_{2}^b}{\lorentz{2}{1}{\fT_b};\lorentz{2}{\infty}{\fT_b}} \, \bignorm{f}{\lorentz{2}{1}{\fT}} 
	\end{aligned}
$$
for every $f$ in $\lorentz{2}{1}{\fT}$, as required.  
Note that the second and the fourth inequality above are trivial; the third follows from Veca's result \cite[Theorem~5.1]{V}.  

Assume now that $b<a^2$, i.e. $\pab \in (1,2)$.  Denote by $o_b$ a fixed, but otherwise arbitrary, reference point in $\fT_b$.
For each nonnegative integer~$r$ consider the functions $\Phi_r$ and $\Phi$ on~$\fT_b$, defined by
$$
	\Phi_r := \frac{\One_{B_r^b(o_b)}}{V^b(r)^{1/\pab}},
	\qquad\qquad
	\Phi :=\sup_{r\in \BN} \, \Phi_r. 
$$
Clearly $\Phi (x) = V^b(\mod{x})^{-1/\pab}$, where $\mod{x}$ denotes the distance from $x$ to $o_b$ in~$\fT_b$.  

Observe that $\Phi$ belongs to $\lorentz{\pab}{\infty}{\fT_b}$.  
Indeed, $E_\la := \big\{x\in \fT_b: \Phi(x) >\la\big\}$ is empty when $\la \geq 1$, and it is equal to the ball $B_{m(\la)}^b (o_b)$, where $m(\la)$
is the largest integer $r$ such that $V^b(r) <\la^{-\pab}$.  Clearly 
$$
\bigmod{B_{m(\la)}^b (o_b)}
< \la^{-\pab}
\quant \la >0,
$$
so that $\bignorm{\Phi}{\lorentz{\pab}{\infty}{\fT_b}} \leq 1$.   Since 
$$
\cM_{\pab}^b f^\sharp (x) 
= \sup_{r\in \BN} \, \int_{\fT_b} \mod{f^\sharp(y)} \,\frac{ \One_{B_r^b(x)}(y)}{V^b(r)^{1/\pab}} \wrt \mu(y) 
\leq  \, \mod{f^\sharp}*\big(\sup_{r\in \BN} \Phi_r\big)(x)
=  \mod{f^\sharp}*\Phi(x),
$$
where $*$ denotes the convolution on $\fT_b$ (see \cite[formula (2.5)]{CMS2}), the sharp form of the 
Kunze--Stein theorem on the group of automorphisms of~$\fT_b$ (see \cite[Theorem 1]{CMS2}) implies that 
$$
\bignorm{\mod{f^\sharp}*\Phi}{\lorentz{\pab}{\infty}{\fT_b}} 
\leq C_p\, \bignorm{\Phi}{\lorentz{\pab}{\infty}{\fT_b}} \bignorm{f^\sharp}{\lorentz{\pab}{1}{\fT_b}},
$$
where $C_p$ is independent of $f^\sharp$.  By combining the last two estimates and \eqref{f: centred M dominated modified M}, we obtain 
$$
\begin{aligned}
\bignorm{\cM f}{\lorentz{\pab}{\infty}{\fT}}
	& \leq C_{a,b}^{-1}  \bignorm{\cM_{\pab}^b f^\sharp}{\lorentz{\pab}{\infty}{\fT}} \\
	& \leq C_{a,b}^{-1}  \bignorm{\cM_{\pab}^b f^\sharp}{\lorentz{\pab}{\infty}{\fT_b}} \\
	& \leq C_p \, C_{a,b}^{-1}  \bignorm{f^\sharp}{\lorentz{\pab}{1}{\fT_b}} \\
	& =    C_p \, C_{a,b}^{-1}  \bignorm{f}{\lorentz{\pab}{1}{\fT}}, 
\end{aligned}
$$
as required.  

Next we indicate the modifications to the proof of \rmi\ needed to prove \rmii.  We use the same notation as in \rmi.
The main observation is that the lower estimate for $\mod{B_r(x)}$ implies the upper estimate 
$$
\frac{1}{|B_r(x)|} \int_{B_r(x)} |f| \wrt\mu 
\leq  \frac{C}{\al^r} \int_{B_r(x)} |f| \wrt\mu 
=     \frac{C}{b^{r/p_{\al,b}}} \int_{{B}_r^b(x)} |f^\sharp| \wrt\mu;
$$
the equality above follows from the trivial fact that $b^{1/p_{\al,b}} = \al$.  Now, $\ds b^r \geq \frac{1}{3}\, V^b(r)$, whence the 
following pointwise bound holds
$$
\cM {f} (x)
\leq C \, \cM_{p_{\al,b}}^b f^\sharp (x) 
\quant x \in \fT,
$$
and we can argue, \textit{mutatis mutandis}, as in the proof of \rmi.  
\end{proof}

Theorem~\ref{t: main ab}~\rmi\ above is sharp, as shown in Proposition~\ref{p: counterexample Stromberg} below.  

Preliminarily, we need to introduce the tree $\fS_{a,b}$ that will play an important role in what follows:
$\fS_{a,b}$  may be thought of as a discrete counterpart of the Riemannian manifold considered by Str\" omberg \cite[Remark~3]{Str}.  
The main idea is to allow only two possible valences, namely $a+1$ and $b+1$, and to keep the points with valence
$a+1$ \textit{suitably} \lq\lq well separated'' from those with valence $b+1$.  Specifically, we require that $\nu(x) = b+1$ for all points~$x$
belonging to the horocycles $\fH_j$, $j\geq 1$, and $\nu(x) = a+1$ for all the other vertices $x$.
We denote by $\fS_a$ and $\fS_b$ the sets of all vertices in $\fS_{a,b}$ with valence $a+1$ and $b+1$, respectively.

\begin{proposition} \label{p: counterexample Stromberg}
The following hold:
\begin{enumerate}
	\item[\itemno1]
		if $a< b \leq a^2$ and $p \in [1,\pab)$, then $\cM$ is not of restricted weak type $(p,p)$ on $\fS_{a,b}$ (hence $\cM$ is unbounded 
		on $\lp{\fS_{a,b}}$);  
	\item[\itemno2]
		if $b> a^2$, then $\cM$ is not of restricted weak type $(p,p)$ on $\fS_{a,b}$ for every $p \in (1,\infty)$; hence $\cM$ is
		bounded on $\lp{\fS_{a,b}}$ if and only if $p=\infty$.
\end{enumerate}
\end{proposition}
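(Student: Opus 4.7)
The plan is to construct, for each relevant range of $p$, explicit test functions whose maximal averages stay large on a large set.  In both parts I would place a reference vertex $x_0\in\fH_M$ on a very high horocycle and exploit that $B_M(x_0)$ sits entirely in the upper region $\fS_b$ of $\fS_{a,b}$, so it behaves as in the $b$-homogeneous tree.

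For part~\rmi, the natural choice is $f=\delta_{x_0}$, which has $\|f\|_{\lorentz{p}{1}{\fS_{a,b}}}\asymp 1$.  The straight-down descendants of $x_0$ landing on $\fH_0$ form the set $s^M(x_0)\cap\fH_0$ of cardinality $b^M$: each step $\fH_j\to\fH_{j-1}$ with $j\geq 1$ originates at a $(b+1)$-valent vertex and thus contributes a factor $b$.  For any such $y$, $d(y,x_0)=M$ and hence $\cM\delta_{x_0}(y)=|B_M(y)|^{-1}$.  Applying Lemma~\ref{l: cardinality of spheres} to $y\in\fH_0$ and splitting $B_M(y)$ by horocycles yields
\[
	\Bigmod{B_M(y)\cap\fS_b}\lesssim b^{M/2}, \qquad \Bigmod{B_M(y)\cap\fS_a}\lesssim a^M,
\]
the first because a sphere of radius $r$ inside $\fS_b$ meets each $\fH_j$ in $O(b^{(r-j)/2})$ vertices, the second because the lower region is $a$-regular from $y$.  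Since $b\leq a^2$ forces $b^{M/2}\leq a^M$, one obtains $|B_M(y)|\lesssim a^M$ and $\cM\delta_{x_0}(y)\gtrsim a^{-M}$.  Taking $\alpha\asymp a^{-M}$ gives
\[
	\alpha^p\,\Bigmod{\{\cM\delta_{x_0}>\alpha\}}\gtrsim b^M\,a^{-Mp}=(b/a^p)^M,
\]
which diverges as $M\to\infty$ precisely when $p<\pab$, ruling out restricted weak type $(p,p)$.

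For part~\rmii, the single-point test function only rules out $p<\pab/2$, so I would pass to $f=\One_E$ with $E:=B_M(x_0)$.  Since $E\subseteq\fS_b$, $|E|\asymp b^M$.  The crucial set of test points is
\[
	\bigcup_{N=0}^{\lfloor M(\pab-2)\rfloor}\bigl(s^{M+N}(x_0)\cap\fH_{-N}\bigr),
\]
which is nontrivial exactly because $\pab>2$.  For each $y$ in this union, $d(y,x_0)=M+N$ so $B_{2M+N}(y)\supseteq E$, and a second application of Lemma~\ref{l: cardinality of spheres} yields $|B_{2M+N}(y)|\lesssim b^M+a^{2M+N}$.  The constraint $N\leq M(\pab-2)$ translates via $a^{\pab}=b$ into $a^{2M+N}\leq b^M$, so $|B_{2M+N}(y)|\lesssim b^M$ and $\cM f(y)\gtrsim 1$ throughout.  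Counting gives $b^M a^N$ points at depth $N$, so summing over $N$ produces $\bigmod{\{\cM f\gtrsim 1\}}\gtrsim b^M\,a^{M(\pab-2)}=b^{2M}/a^{2M}$.  Then $\|\cM f\|_{\lorentz{p}{\infty}{\fS_{a,b}}}^p/\|\One_E\|_{\lorentz{p}{1}{\fS_{a,b}}}^p\gtrsim (b/a^2)^M$ diverges for every finite $p$ since $b>a^2$; coupled with the trivial $\ly{\fS_{a,b}}$ bound, this proves the dichotomy.

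The principal obstacle is the volume bookkeeping in Lemma~\ref{l: cardinality of spheres}: obtaining both the two-sided estimate $|B_R(y)\cap\fH_j|\asymp b^{\lfloor(R-|j-h(y)|)/2\rfloor}$ on upper horocycles and the more delicate mixed formula on lower horocycles requires a case analysis that tracks whether the confluence of $y$ with a target vertex lies in $\fS_b$ or in $\fS_a$.  Once these estimates are in place, the rest is elementary arithmetic with the identity $a^{\pab}=b$.
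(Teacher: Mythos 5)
Your overall strategy coincides with the paper's: the same tree $\fS_{a,b}$, essentially the same test functions (a point mass high on the $\om$-ray for \rmi; for \rmii\ a set of mass $\asymp b^M$ tested against its descendants down to height $-\lfloor M(\pab-2)\rfloor$), the same sphere count via Lemma~\ref{l: cardinality of spheres}, and the same arithmetic with $a^{\pab}=b$. Part \rmii\ is sound as written: summing over all depths $N$ instead of only the extremal one, as the paper does, changes nothing of substance, and the slip that $E=B_M(x_0)$ is not literally contained in $\fS_b$ (its height-$0$ shell lies in $\fS_a$) is harmless for $\bigmod{E}\asymp b^M$.

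The one step that fails is the estimate $\bigmod{B_M(y)\cap\fS_a}\lesssim a^M$ in part \rmi\ when $b=a^2$, a case included in the hypothesis $a<b\leq a^2$. For $y\in\fH_0$ the relevant count is $a^M+\sum_{k=1}^{\lfloor M/2\rfloor}(b-1)\,b^{k-1}a^{M-2k}$, and when $b=a^2$ every summand equals $(b-1)\,a^{M-2}$, so the sum is $\asymp M\,a^M$, not $O(a^M)$: the lower region is indeed $a$-regular \emph{below} $y$, but most of $B_M(y)\cap\fS_a$ is reached by first ascending into $\fS_b$ and then descending, which is where the extra factor of $M$ comes from. The conclusion survives, since with $\alpha\asymp (M a^M)^{-1}$ one gets $\alpha^p\bigmod{E}\gtrsim M^{-p}(b/a^p)^M=M^{-p}a^{(2-p)M}$, which still diverges for all $p<2=\pab$; but you must make the case distinction $b<a^2$ versus $b=a^2$ explicit, exactly as the paper does.
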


\begin{proof}
Consider the geodesic ray $\om = \{o,p(o),p^2(o),p^3(o), \ldots\}$.

First we prove \rmi.
For large even positive integers $n$, set
$$
E_n
:= s^{n}\big(p^n(o)\big).  
$$
Observe that $E_n$ is contained in $\fH_0$, and that $|E_n| = b^n$.  We estimate $\cM \de_{p^n(o)}(x)$ for all $x$ in $E_n$.  

Suppose that $x$ belongs to $E_n$.  Then $\bigmod{s^n(x)} = a^n$, because $x$ and all its successors lie in $\fS_a$.  
Furthermore $p^k(x)$, $k=1,\ldots,n$, lies in $\fS_b$ together with all of its successors up to the step $k-1$, whereas its successors of steps
ranging from $k$ up to $n$ lie in $\fS_a$.  Therefore if $1\leq k\leq n/2$, then 
\begin{equation} \label{f: cardinality of speres Sab}
	\bigmod{S_n(x) \cap \fH_{h(x) + 2k-n}}
	= (b-1) \,b^{k-1} \, a^{n-2k}.  
\end{equation}
Now we distinguish two cases, according to whether $b<a^2$ or $b=a^2$.  In the first case, by Lemma~\ref{l: cardinality of spheres},
$$
\begin{aligned}
	\bigmod{S_n(x)}
	& = a^n + \sum_{k=1}^{n/2} \, (b-1) \,b^{k-1} \, a^{n-2k} + (b-1) \, \sum_{k=n/2+1}^{n-1} \, b^{n-k-1} + 1\\
	& = a^n \Big[1+ (b-1)\, \frac{1-(b/a^2)^{n/2}}{a^2-b}\Big] + b^{n/2-1}\\
	& \leq \be_{a,b}\, a^n,
\end{aligned}
$$
where $\ds \beta_{a,b} :=  \frac{1}{a^2}\, \frac{a^4-b}{a^2-b}$.  Thus 
$$
	\cM \de_{p^n(o)} (x) 
	\geq \frac{1}{\mod{B_n(x)}} 
	\geq  \frac{c_{\fS_{a,b}}}{\mod{S_n(x)}}  
	\geq  \frac{c_{\fS_{a,b}}}{{\be_{a, b}}} \, a^{-n}
	\quant x \in E_n, 
$$
where $c_{\fS_{a,b}}$ denotes the Cheeger constant of $\fS_{a,b}$.  Set $\ds\la_n := \frac{c_{\fS_{a,b}}}{{\be_{a, b}}} \, a^{-n}$.  
If $\cM$ were bounded from $\lorentz{p}{1}{\fS_{a,b}}$ to $\lorentz{p}{\infty}{\fS_{a,b}}$
for some $p\in [1,\pab)$, then there would exist a constant $C$, independent of~$n$, such that 
$$
	\bigmod{\{y\in \fS_{a,b}: \cM \de_{p^n(o)} (y) > \la_n \}}
	\leq C\, \la_n^{-p}.
$$
Since the level set $\{\cM \de_{p^n(o)} > \la_n \}$ contains $E_n$, and $\bigmod{E_n} = b^n$, we should have $b^n \leq C\, \la_n^{-p}$, which, 
however, fails for $n$ large as soon as $p< \pab$.

By arguing similarly, we see that in the second case, i.e. when $b=a^2$, $\ds \bigmod{S_n(x)} \leq n\, a^n$ 
for all large $n$.  By choosing $\la_n' := c_{\fS_{a,b}}  \,n^{-1} \, a^{-n}$, and following the steps above, we can still conclude that $p\geq  \pab$.

Next we prove \rmii.  The idea of the proof is similar to that of \rmi, but with an important variant.  For large positive integers $m$ and $n$, 
let $E_n$ be as in~\rmi, and set
$$
F_{n,m}
	:= s^{n+m}(p^n(o)).
$$
It is straightforward to check that $\bigmod{F_{n,m}} = b^n \, a^m$.  We shall estimate $\cM\One_{E_n}$ on $F_{n,m}$, and choose $m$ 
suitably as a function of $n$.  

We need to estimate $\bigmod{S_{m+2n}(x)}$ for every $x$ in $F_{n,m}$.  For the sake of simplicity, we write provisionally
$h$ in place of $m+2n$.  Notice that  
$$
\begin{aligned}
|S_{h}(x)|
&  = a^{h}+(a-1)\sum_{j=1}^{m}a^{h-j-1}+(b-1)\sum_{j=m+1}^{m+n-1}b^{j-m-1}a^{h-2j+m}\\ 
&  \qquad+(b-1)\sum_{j=m+n}^{h-1}b^{h-j-1}+1.
\end{aligned}
$$
Observe that $\ds (b-1)\, \sum_{j=m+1}^{m+n-1}b^{j-m-1}a^{m+2n-2j+m} \leq  b^{n+1}$, because $b>a^2$.  Simple estimates yield the bound 
$$
\bigmod{S_{h}(x)}
\leq 2a^{h} + 2b^{n+1}.  
$$
Recall that $\pab > 2$, for $b>a^2$.   
Set $m_0 := \lfloor n (\pab-2)\rfloor$ and $h_0=m_0+2n$, and note that $m_0$ is the biggest nonnegative integer such that $a^{h_0} \leq b^n$.  Then,
by the Cheger isoperimetric inequality, 
$$
\bigmod{B_{h_0}(x)}
\leq c_{\fS_{a,b}}^{-1} \, \bigmod{S_{h_0}(x)}
\leq 2\big(1+b\big) \,c_{\fS_{a,b}}^{-1} \, b^n.   
$$
Therefore, if $x \in F_{n,m}$, then 
$$
\cM \One_{E_n}(x) 
\geq \frac{1}{|B_{h_0}(x)|} \int_{B_{h_0}(x)} \!\!\One_{E_n} \wrt \mu 
=    \frac{\bigmod{E_n}}{\bigmod{B_{h_0}(x)}} 
\geq \frac{c_{\fS_{a,b}}}{2(1+b)},
$$
and $F_{n,m_0} \subseteq \big\{\cM \One_{E_n} \geq \la_{a,b}\big\}$, where $\la_{a,b} := c_{\fS_{a,b}}/[2(1+b)]$.  
If $\cM$ were bounded from $\lorentz{p}{1}{\fS_{a,b}}$ to $\lorentz{p}{\infty}{\fS_{a,b}}$ for some $p \in [1,\infty)$, then there would exist
a constant $C$, independent of $n$, such that 
$$
a^{m_0}\, b^n 
= \bigmod{F_{n,m_0}} 
\leq \bigmod{\big\{\cM \One_{E_n} \geq \la_{a,b}\big\}}  
\leq \frac{C}{\la_{a,b}^p} \, \bigmod{E_n}
= \frac{C}{\la_{a,b}^p} \, b^n.  
$$
This inequality fails for $n$ large, because $a^{m_0}$ tends to infinity as $n$ tends to infinity.  

This concludes the proof of \rmii, and of the proposition.  
\end{proof}

%
%

Our next goal is to produce examples of trees $\fT$ in $\Upsilon_{a,b}$ with the property that $\cM$ is bounded on $\lp{\fT}$ if and only if $p>q$, 
where $q$ is ``any'' rational number in $(1,\pab)$.  
This will be pursued in the next proposition.  First, we need a few preliminaries.  

For positive integers $m$ and $n$, consider the tree $\fU_{a,b}^{m,n}$ in $\Upsilon_{a,b}$ characterised by the property that $\nu(x) = b+1$ 
if either $h(x) \geq 1$, or $-k(m+n)-n\leq h(x)< -k(m+n)$, $k=1,2,\ldots$, and $\nu(x) = a+1$ otherwise.  Define
\begin{equation} \label{f: def al}
\al:=(a^mb^n)^{1/(m+n)}.   
\end{equation}
We \textit{claim} that 
\begin{equation} \label{f: claim lb}
\bigmod{B_r(x)} 
\geq \al^{r-m-n}
\quant r \in \BN \quant x\in \fU_{a,b}^{m,n}.  
\end{equation}
Clearly, given a nonnegative integer $r$, the balls of radius $r$ with smallest volume are contained in $\{y\in \fU_{a,b}^{m,n}: h(y) \leq 0\}$.  
Furthermore, if $B_r(x)$ is such a ball, then $\bigmod{B_r(x)}\geq \bigmod{s^r(x)}$.  If $k$ is a positive integer such that
$k(m+n) \leq r<(k+1)\, (m+n)$, then 
$$
\bigmod{s^r(x)} 
\geq (a^mb^n)^k
= (a^mb^n)^{\lfloor r/(m+n)\rfloor}
\geq \al^{r-m-n}.   
$$
If, instead, $0\leq r < m+n$, then 
$$
|B_r(x)| 
\geq 1  
\geq \al^{r-m-n}.
$$  
The claim follows by combining these estimates.

Suppose now that $\al\leq b\leq \al^2$, or equivalently, adopting the notation introduced in Theorem~\ref{t: main ab}~\rmii,
$p_{\al,b} := \log_\al b \leq 2$.  Then we can apply Theorem~\ref{t: main ab}~\rmii, and
conclude that $\cM$ is of restricted weak type $(p_{\al,b},p_{\al,b})$, and it is bounded on $\lp{\fU_{a,b}^{m,n}}$  
for every $p$ in $(p_{\al,b},\infty]$.  

Notice that if $\pab \leq 2$, i.e. if $b\leq a^2$, then the condition $b\leq \al^2$ is satisfied for every pair $m, n$ of positive integers. 

\begin{proposition} \label{p: denseness}
Suppose that $a$ and $b$ are integers such that $2\leq a< b \leq a^2$ and that $q$ is in $(1,\pab)$.   Then for every $\vep >0$ there exists $s$ such that
$1<q-\vep< s\leq q$ and a tree $\fT$ in $\Upsilon_{a,b}$ such that $\cM$ is bounded on $\lp{\fT}$ if and only if $p>s$.
\end{proposition}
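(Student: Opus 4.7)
The plan is to take $\fT := \fU_{a,b}^{m,n}$ and $s := p_{\al, b} = \log_\al b$, with $\al := (a^m b^n)^{1/(m+n)}$, for a suitable pair of positive integers $m, n$. A direct computation yields
$$
p_{\al, b} = \frac{(m+n)\, \pab}{m + n\, \pab},
$$
which, viewed as a function of $t = n/m \in (0, \infty)$, is continuous and strictly decreasing from $\pab$ (at $t \to 0^+$) to $1$ (as $t \to \infty$). Hence the set $\{p_{\al, b}(n/m) : m, n \geq 1\}$ is dense in $(1, \pab)$ and we can choose $m, n$ so that $s \in (q - \vep, q]$. Note that the hypothesis $b \leq a^2$ forces $\pab \leq 2$, and since $q < \pab$ we obtain the strict inequality $s < 2$, which is crucial below. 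Boundedness of $\cM$ on $\lp{\fT}$ for every $p > s$ is then immediate from Theorem~\ref{t: main ab}~\rmii: the preliminary volume estimate $|B_r(x)| \geq \al^{r-m-n}$ on $\fU_{a,b}^{m,n}$ applies and $b \leq \al^2$ follows at once from $b \leq a^2$.

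For the failure of boundedness when $p < s$ I would follow the Str\"omberg scheme of Proposition~\ref{p: counterexample Stromberg}~\rmi. Fix $N$ a large multiple of $m + n$ and set $f_N := \de_{p^N(o)}$ and $E_N := s^N(p^N(o)) \subseteq \fH_0$. Since the descent from $\fH_N$ to $\fH_1$ lies entirely in the $b$-region of $\fU_{a,b}^{m,n}$, $|E_N| = b^N$. The crucial step is the upper bound $|B_N(x)| \leq C\, \al^N$ for every $x \in E_N$, which I would obtain from Lemma~\ref{l: cardinality of spheres}, Proposition~\ref{p: Cheeger}, and the periodic alternation of $m$ $a$-horocycles followed by $n$ $b$-horocycles below $\fH_0$. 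The leading term $|s^N(x)| \approx \al^N$ dominates because the sibling-branch contributions, of order $b^{N/2}$, are subdominant under the strict inequality $s < 2$. Since $d(x, p^N(o)) = N$ on $E_N$, this yields $\cM f_N(x) \geq c\, \al^{-N}$ uniformly on $E_N$. A putative restricted weak-type $(p, p)$ bound then forces $b^N \leq C_p\, \al^{Np}$ for every $N$, equivalently $p \geq p_{\al, b} = s$, so $\cM$ is not of restricted weak type $(p, p)$ and \emph{a fortiori} not bounded on $\lp{\fT}$ whenever $p < s$.

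The critical exponent $p = s$ is the main obstacle: the single-delta Str\"omberg test is in perfect balance at $p_{\al, b}$ and produces no contradiction. I expect to handle this endpoint by constructing a more intricate test function, for instance a sum $f = \sum_{k=1}^{K} \de_{y_k}$ whose support consists of $K \leq b-1$ siblings of a common high-horocycle ancestor $u = p^{N+1}(o)$, chosen so that all $K$ points simultaneously lie in a single ball of radius $N+2$ and volume $\lesssim \al^{N+2}$ for every $x$ in the union of the remaining $b - K$ sibling subtrees intersected with $\fH_0$, and by iterating and coupling the construction across many horocyclic scales. Ensuring that the resulting lower bounds on $\|\cM f\|_s / \|f\|_s$ actually diverge, thereby strictly breaking the borderline restricted weak-type estimate at $p = s$, is the delicate point on which the whole argument hinges.
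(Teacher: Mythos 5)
Your construction, exponent, and counterexample are exactly those of the paper: the authors also take $\fT=\fU_{a,b}^{m,n}$, set $s=p_{\al,b}=\log_\al b$, observe that these values are dense in $(1,\pab)$ (they phrase the density via convex combinations of $\log a$ and $\log b$ in the denominator of $\log b/\log\al$ rather than through your explicit formula $(m+n)\pab/(m+n\pab)$, but it is the same fact), obtain boundedness for $p>s$ from Theorem~\ref{t: main ab}~\rmii\ together with the volume lower bound \eqref{f: claim lb}, and run the Str\"omberg test with $E_r=s^r(p^r(o))$, $\bigmod{E_r}=b^r$ and $\bigmod{B_r(x)}\leq C\al^r$ to rule out restricted weak type $(p,p)$ for $p<s$. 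Your observation that $s\leq q<\pab\leq 2$ forces $s<2$ is correct; the paper nevertheless records the alternative bound $Cr\al^r$ for $s=2$, which in fact never occurs under the hypotheses of this proposition.

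Concerning the endpoint $p=s$ that occupies your last paragraph: the paper's own proof does not address it either. It establishes boundedness for $p>s$ and unboundedness for $p<s$ and stops there; restricted weak type $(s,s)$ does not yield strong type $(s,s)$, and, as you correctly note, the single-delta test is in exact balance at $p=s$. So the multi-scale construction you sketch is not needed to reproduce the paper's argument, and you should not treat its absence as a defect of your write-up relative to the source. That said, if the ``if and only if'' in the statement is read literally, the case $p=s$ is a gap common to your proposal and to the paper; the honest options are to supply an endpoint argument or to state the conclusion as ``bounded for $p>s$, unbounded for $p<s$''.
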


\begin{proof}
For each pair of positive integers $m$ and $n$, consider the number $\al$ defined in \eqref{f: def al}.  The tree $\fU_{a,b}^{m,n}$ is in the class 
$\Upsilon_{a,b}$ and its geodesic balls satisfy the lower estimate \eqref{f: claim lb}.  Notice that $a\leq \al$, whence $\log_\al b \leq  \log_ab \leq 2$; 
the last inequality follows from the assumption $b\leq a^2$.  

By Theorem~\ref{t: main ab}~\rmii, the maximal function $\cM$ is bounded on $\lp{\fU_{a,b}^{m,n}}$ for $p>p_{\al,b} := \log_\al b$ and 
it is of restricted weak type $(p_{\al,b},p_{\al,b})$.  

Notice that any rational number in the interval $(0,1)$ can be written in the form $m/(m+n)$, for suitable positive integers $m$ and $n$.  Now,
$$
\log_\al b 
= \frac{\log b}{\log \al} 
= \frac{\log b}{\frac{m}{m+n} \, \log a + \frac{n}{m+n} \, \log b}.
$$
The denominator of the latter ratio is a convex combination of $\log a$ and $\log b$ with rational coefficients.  As $m$ and $n$ vary,
the denominator is dense in the interval $(\log a, \log b)$, so that the set of all numbers of the form $\log_\al b$ is dense in $(1,\pab)$.
Hence for every $\vep>0$ we can choose $m$ and $n$ so that $p_{\al,b} \in (q-\vep,q]$, and define $s:= p_{\al,b}$. 

It remains to prove that $\cM$ is unbounded on $\lp{\fU_{a,b}^{m,n}}$ for $p<s$.  We follow the strategy of the proof of 
Proposition~\ref{p: counterexample Stromberg}~\rmi.

For large positive integers $k$, set $r := 2k(m+n)$.  Fix a reference point $o$ in $\fH_0$ and consider
$$
E_r
:= s^{r}\big(p^r(o)\big).  
$$
Observe that $E_r$ is contained in $\fH_0$, and that $|E_r| = b^r$.  We estimate $\cM \de_{p^r(o)}(x)$ for all $x$ in $E_r$.  

By arguing much as in the proof of Proposition~\ref{p: counterexample Stromberg}~\rmi, it is not hard to prove that there exists a constant $C$
such that 
$$
	\bigmod{B_r(x)}
	\leq 
	\begin{cases}
		C\al^r 		& \hbox{if $s<2$} \\
		Cr\al^r	 	& \hbox{if $s=2$}.
	\end{cases}
$$
Suppose that $s<2$.  If $x$ belongs to $E_n$, then
$$
	\cM \de_{p^r(o)} (x) 
	\geq \frac{1}{\mod{B_r(x)}} 
	\geq  C\, \al^{-r}.  
$$
Set $\ds\la_r := C\, \al^{-r}$.  
If $\cM$ were bounded from $\lorentz{p}{1}{\fU_{a,b}^{m,n}}$ to $\lorentz{p}{\infty}{\fU_{a,b}^{m,n}}$
for some $p\in [1,s)$, then there would exist a constant $C$, independent of~$r$, such that 
$$
	\bigmod{\{y\in \fU_{a,b}^{m,n}: \cM \de_{p^n(o)} (y) > \la_r \}}
	\leq C\, \la_r^{-p}.
$$
Since the level set $\{\cM \de_{p^r(o)} > \la_r \}$ contains $E_r$, and $\bigmod{E_r} = b^r$, we should have $b^r \leq C\, \la_r^{-p}$, equivalently
$b^{r} \leq C\, \al^{pr}$.   This inequality fails for $n$ large as soon as $p<s$.

The case where $s=2$ can be treated similarly.  We omit the details.
\end{proof}

We now complement Theorem~\ref{t: main ab}~\rmii, by proving a simple sufficient condition, concerning the volume growth of geodesic balls
of trees in the class~$\Upsilon_{a,b}$, that guarantee that $\cM$ is of weak type $(1,1)$.   Our result, is, in fact, a rather direct consequence of the
method developed in \cite{NT} (see, in particular, Lemma~5.1 and the proof of Theorem~1.5, and its generalisation in \cite[Theorem~4.1]{ST}).

For the sake of completeness, we include a complete proof of the next lemma.  

\begin{lemma} \label{l: TAO}
Suppose that $\fT$ is in $\Upsilon_{a,b}$.  Assume that $\al$ and $c_2$ are constants such that 
$$
|S_r(x)| \le c_2 \, \al^r 
\quant x \in \fT \quant r \in \BN.
$$
Then
$$
\bigmod{\{ (x,y) \in A \times B: d(x,y)=r\}}
\le 8c_2\, \sqrt{|A| \,|B|\, \al^{r}}
$$
for all finite subsets $A$ and $B$ of $\fT$ and every $r$ in $\BN$.  
\end{lemma}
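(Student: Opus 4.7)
The plan is to proceed by induction on $r$ combined with a midpoint decomposition of geodesics. The base case $r=0$ is immediate, since $\bigmod{A\cap B}\leq\sqrt{\bigmod{A}\bigmod{B}}$ and $8c_2\geq 1$.

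For the inductive step with $r=2k$ even, I would use that each pair $(x,y)\in A\times B$ at distance $r$ admits a \emph{unique} midpoint $m\in\fT$ on its geodesic, characterised by $d(x,m)=d(y,m)=k$ together with $x,y$ lying in distinct branches of $\fT$ issuing from $m$. Summing over midpoints yields
\[
\bigmod{\{(x,y)\in A\times B:d(x,y)=2k\}}
\leq \sum_{m\in\fT}\bigmod{A\cap S_k(m)}\cdot\bigmod{B\cap S_k(m)},
\]
and the Cauchy--Schwarz inequality bounds the right-hand side by
$\bigl(\sum_m\bigmod{A\cap S_k(m)}^2\bigr)^{1/2}\bigl(\sum_m\bigmod{B\cap S_k(m)}^2\bigr)^{1/2}$.
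The odd case $r=2k+1$ is handled in the same spirit, by decomposing each geodesic of length $2k+1$ according to its unique middle edge and obtaining an analogous estimate with $\bigmod{A\cap S_k(m_1)}\cdot\bigmod{B\cap S_k(m_2)}$ in place of the single-vertex product.

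The central task is then to estimate $\sum_m\bigmod{A\cap S_k(m)}^2$, which, after swapping the order of summation, equals $\sum_{x_1,x_2\in A}\bigmod{S_k(x_1)\cap S_k(x_2)}$. The sphere hypothesis controls the diagonal $x_1=x_2$ by $c_2\al^k$. For $x_1\neq x_2$, the key tree-geometric fact is that $S_k(x_1)\cap S_k(x_2)$ is empty unless $d(x_1,x_2)=2j$ is even with $j\leq k$; in that case, any common equidistant point lies in the branch of the midpoint of $x_1$ and $x_2$ not containing either of them, and hence $\bigmod{S_k(x_1)\cap S_k(x_2)}\leq c_2\al^{k-j}$. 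Collecting all contributions and invoking the inductive hypothesis on the smaller counts $P_{2j}(A,A):=\bigmod{\{(x_1,x_2)\in A\times A:d(x_1,x_2)=2j\}}$ for $j<k$ then controls the full sum in terms of $\bigmod{A}\al^k$, and similarly for $B$; taking the square root and re-entering Cauchy--Schwarz produces the claimed bound.

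The main obstacle I foresee is the constant tracking. A naive insertion of the inductive bound $P_{2j}(A,A)\leq 8c_2\bigmod{A}\al^j$ into the geometric sum $\sum_{j=0}^k \al^{k-j} P_{2j}(A,A)$ produces an extra factor proportional to $k+1$, which would be fatal for a constant-factor estimate. Resolving this requires exploiting the strict tree branching guaranteed by $\nu\geq a+1\geq 3$: the \lq\lq third branch" around the midpoint of $x_1$ and $x_2$ carries a sharp portion of the sphere at its depth, so that the telescoping of the bounds on $\bigmod{S_k(x_1)\cap S_k(x_2)}$ collapses to a bounded multiple of $\bigmod{A}\al^k$ uniformly in $k$, and the final constant $8c_2$ emerges.
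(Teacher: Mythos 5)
There is a genuine gap, and it sits exactly where you located it: the factor $k+1$ in the midpoint sum is not an artifact of lossy estimation that a sharper branching argument could remove; it is really there. By double counting, $\sum_m \bigmod{A\cap S_k(m)}^2=\sum_{x_1,x_2\in A}\bigmod{S_k(x_1)\cap S_k(x_2)}$ is an exact identity, and for $A=S_N(o)$ in the homogeneous tree $\fT_b$ with $N\geq k$ one computes that the number of pairs of points of $A$ at mutual distance $2j$ is comparable to $\bigmod{A}\,b^j$, while $\bigmod{S_k(x_1)\cap S_k(x_2)}$ is comparable to $b^{k-j}$; hence the sum is genuinely of order $(k+1)\,\bigmod{A}\,b^k$, not $C\,\bigmod{A}\,b^k$. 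Since the quantity you are trying to bound, $\bigmod{\{(x,y)\in A\times A: d(x,y)=2k\}}$, is itself only of order $\bigmod{A}\,b^k$ in this example, your very first inequality $\bigmod{\{(x,y): d(x,y)=2k\}}\leq\sum_m\bigmod{A\cap S_k(m)}\,\bigmod{B\cap S_k(m)}$ already loses an unbounded factor: the right-hand side counts pairs lying in the \emph{same} branch at $m$, which are at distance strictly less than $2k$ and are recounted for many $m$. No refinement of the upper bound on $\bigmod{S_k(x_1)\cap S_k(x_2)}$ via the \lq\lq third branch'' can repair this, because the identity above is exact. The symmetric midpoint-plus-Cauchy--Schwarz route therefore cannot yield a constant-factor bound.

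The paper's proof avoids this by an asymmetric decomposition in the spirit of Naor--Tao. Fix a root $o$ and slice $A$ and $B$ into spherical layers $A_j=A\cap S_j(o)$ and $B_k=B\cap S_k(o)$. For $x\in A_j$ the set $S_r(x)\cap S_k(o)$ is contained in a single sphere of radius $r-m$ centred on the geodesic $[o,x]$, where $m=(r+j-k)/2$ is forced by $j,k,r$; this and its mirror image give $\bigmod{E_{j,k,r}}\leq c_2\min\big(\bigmod{A_j}\,\al^{r-m},\,\bigmod{B_k}\,\al^{m}\big)$. Writing $e_j=\bigmod{A_j}/\al^j$ and $d_k=\bigmod{B_k}/\al^k$, this equals $c_2\,\al^{(j+k+r)/2}\min(e_j,d_k)$, and the resulting double sum $\sum_{j,k}\al^{(j+k)/2}\min(e_j,d_k)$ is bounded by $8\sqrt{\bigmod{A}\,\bigmod{B}}$ by splitting the sum at $j<k+\beta$ versus $j\geq k+\beta$ and optimising over $\beta$. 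It is this $\min$ structure together with the threshold optimisation --- not any branching property of the tree --- that prevents the loss of a factor of order $r$; to salvage your argument you would need to import something equivalent.
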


\begin{proof} 
Think of $\fT$ as a rooted tree at $o$.  

Suppose that $\mod{x} = j$ and denote by $z_0 := o, \ldots, z_j :=x$ the points on the geodesic joining $o$ and $x$.  For each point $y$ in $S_r(x) \cap S_k(o)$, 
denote by $x \wedge y$ the vertex with biggest distance from $o$ that belongs to both geodesics $[o,x]$ and $[o,y]$, and set $m := d(x,x \wedge y)$. 
Then $x \wedge y = z_{j-m}$, and 
$$
r= d(x,y) = d(x,z_{j-m}) + d(z_{j-m}, y)=m + k-(j-m) = 2m+k-j.
$$  
Thus, given $r$, $j$ and $k$, the integer $m$ is uniquely determined, viz. $m = (r+j-k)/2$.  Consequently if $\mod{x} = j$, then all the points in $S_r(x) \cap S_k(o)$
belong to the sphere with centre $z_{j-m}$ and radius $r-m$.  Hence, if $\mod{x} = j$, then 
\begin{equation} \label{f: product I}
\bigmod{\{y \in S_r(x): |y|=k\}}
\le c_2\, \al^{r-m}.
\end{equation}
A similar reasoning yields that if $\mod{y} = k$, then 
\begin{equation} \label{f: product II}
\bigmod{\{x \in S_r(y): |x|=j\}}
\le c_2\, \al^{m}.
\end{equation}
For every pair $(j,k)$ of nonnegative integers define 
$$
A_j:=A \cap S_j(o),
\qquad
B_k:=B \cap S_k(o),
$$
and 
$$
E_{j,k,r}
:= \{ (x,y) \in A_j \times B_k: d(x,y)=r\}.  
$$ 
Furthermore, set 
$
E_{r}
:= \{ (x,y) \in A \times B: d(x,y)=r\}.  
$
Observe that 
$$
\bigmod{E_{j,k,r}}
=    \sum_{x \in A_j } \sum_{y \in B_k} \One_{S_r(x)}(y) 
=    \sum_{y \in B_k } \sum_{x \in A_j} \One_{S_r(y)}(x).  
$$
Now \eqref{f: product I} and \eqref{f: product II} imply that
$$
	\bigmod{E_{j,k,r}}
	\le  c_2\min\, \big(|A_j| \, \al^{r-m},|B_k|\, \al^m\big).
$$
Clearly
$$
\begin{aligned}
\bigmod{E_r}
	& =   \sum_{j,k=0}^\infty \, \bigmod{E_{j,k,r}} \\
	& =   \sum_{m=0}^r\, \sum_{j,k: \, k=j+r-2m}\, \bigmod{E_{j,k,r}} \\
	& \le c_2 \sum_{m=0}^r \,  \sum_{j,k: \, k=j+r-2m} \!\! \min \big( |A_j| \,\al^{r-m},|B_k|\, \al^m \big).
\end{aligned}
$$
Define $e_j:=|A_j|/\al^j$ and $d_k:=|B_k|/\al^k$, and assume that $k=j+r-2m$.  Then   
$$
\begin{aligned}
\min \big(|A_j|\, \al^{r-m},|B_k|\, \al^m \big)
	& = \min \big(\al^{r-m+j}e_j, \al^{m+k}d_k \big) \\
	& = \al^{(k+j+r)/2} \, \min\big(e_j,d_k\big);
\end{aligned}
$$
the last equality follows from the fact that $r-m+j = m+k$.
Altogether, we see that 
$$
|E_r|  
\le c_2\, \al^{r/2}\sum_{j,k =0}^\infty \, \al^{(k+j)/2}\, \min (e_k,d_j).
$$
Henceforth, we proceed exactly as in the last part of the proof of \cite[p.~759--760]{NT}.  We include the details for the reader's convenience. 

We claim that 
$$
\sum_{j,k =0}^\infty \al^{(k+j)/2}\, \min \big(e_j,d_k) 
\le 8 \, \sqrt{|A|\,|B|}.
$$
Indeed, for some $\beta$ to be chosen later,
$$
\begin{aligned}
	\sum_{j,k =0}^\infty \al^{(j+k)/2}\min(e_j,d_k)
& \leq   \sum_{j,k: \, j<k+\beta} \al^{(j+k)/2}d_k  +  \sum_{j,k: \, j \ge k+\beta} \al^{(j+k)/2}e_j \\ 
& =      \sum_{k=0}^\infty d_k \al^{k/2} \sum_{j < k+\beta} \al^{j/2}+\sum_{j=0}^\infty \al^{j/2}e_j \sum_{k \le j-\beta}\al^{k/2} \\ 
& \leq   4 \, \Big(\al^{\beta/2} \sum_{k=0}^\infty d_k \al^{k} +\al^{-\beta/2}\sum_{j=0}^\infty \al^{j}e_j\Big)\\ 
& =      4 \, \Big(\al^{\beta/2} |B| +\al^{-\beta/2}|A|\Big).
\end{aligned} 
$$
Optimizing in $\beta$, we obtain that $\beta$ such that $\al^{\beta/2}=\sqrt{|A|/|B|}$ is the best constant. This gives the desired conclusion.
\end{proof}


\begin{theorem} \label{t: weak11}
Suppose that $\fT$ is in $\Upsilon_{a,b}$.  Assume that there exist positive constants $\al$, $c_1$ and $c_2$ such that 
\begin{equation}\label{f: equicomparability}
c_1\, \al^r \le |S_r(x)| \le c_2 \, \al^r 
\quant x \in \fT, \quant r \in \BN.
\end{equation}
Then $\cM$ is of weak type (1,1).
\end{theorem}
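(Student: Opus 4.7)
The plan is to reduce the statement to the Soria--Tradacete criterion \cite[Theorem~4.1]{ST}, which guarantees the weak type $(1,1)$ boundedness of $\cM$ on a bounded-valence graph under a packing-type hypothesis on the number of pairs of vertices at prescribed mutual distance, normalised by the appropriate ball volume. Since $\fT\in\Upsilon_{a,b}$ has uniformly bounded valence ($\nu(x)\le b+1$), the task is reduced to verifying that packing estimate from \eqref{f: equicomparability} and Lemma~\ref{l: TAO}.

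First I would exploit the two-sided bound \eqref{f: equicomparability} by summing over radii to obtain comparability of ball volumes,
$$
c_1'\,\al^r \le |B_r(x)| \le c_2'\,\al^r \qquad \forall\, x\in\fT,\ \forall\, r\in\BN,
$$
for constants $c_1',c_2'>0$ depending only on $c_1,c_2,\al$; note that $\al\ge a\ge 2$, so $\al>1$ and these bounds are nontrivial.

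Next I would apply Lemma~\ref{l: TAO}, which uses only the upper bound in \eqref{f: equicomparability}, to obtain for every pair of finite sets $A,B\subset\fT$ and every $r\in\BN$ the estimate
$$
\bigmod{\{(x,y)\in A\times B : d(x,y)=r\}} \le 8c_2\,\sqrt{|A|\,|B|\,\al^r}.
$$
Combined with the ball-volume comparability above, this can be rewritten as
$$
\bigmod{\{(x,y)\in A\times B : d(x,y)=r\}} \le C\,\sqrt{|A|\,|B|\,|B_r(z)|} \qquad \forall\, z\in\fT,
$$
which is precisely the form of the Soria--Tradacete packing hypothesis. Invoking \cite[Theorem~4.1]{ST} then yields the desired weak type $(1,1)$ bound for $\cM$.

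The main obstacle is simply to check that the precise formulation of the packing condition in \cite[Theorem~4.1]{ST} matches the estimate derived here; the analytic content — the pair-counting inequality of Naor--Tao flavour — is already isolated in Lemma~\ref{l: TAO}, so once the correspondence is verified the conclusion follows at once.
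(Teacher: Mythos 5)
Your proposal follows essentially the same route as the paper: the pair-counting estimate of Lemma~\ref{l: TAO} together with the two-sided sphere bound \eqref{f: equicomparability} is fed into \cite[Theorem~4.1]{ST}, whose hypotheses hold thanks to the bounded valence of trees in $\Upsilon_{a,b}$. The one step you leave as a ``correspondence check'' is genuinely there but routine: the criterion of \cite[Theorem~4.1]{ST} is not literally the packing inequality $\le C\sqrt{|A|\,|B|\,|B_r(z)|}$ you write down, but the finiteness of $\sup_{n}2^{n/2}\sum_{r:\,\al^r\ge 2^{n-1}/c_2}\cE_\fT(r)\,c_2^{1/2}\al^{r/2}$ for a normalised pair-counting functional $\cE_\fT(r)$, which the paper verifies by deducing $\cE_\fT(r)\le 64(c_2/c_1)^{2}\al^{-r}$ from Lemma~\ref{l: TAO} and the \emph{lower} bound in \eqref{f: equicomparability}, and then summing the resulting geometric series.
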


\begin{proof}
Set 
$$
\cE_\fT(r) 
:= \sup_{A,B \subset \mathfrak{T}: 0<|A|,|B|<\infty} \frac{1}{|A||B|} \Big(\sum_{ x \in B}\frac{|A \cap S_r(x)|}{|S_r(x)|}\Big)^2,
$$ 
and observe that Lemma~\ref{l: TAO} implies that 
\begin{equation} \label{f: claim}
\cE_\fT(r) 
	\leq 64\Big(\frac{c_2}{c_1}\Big)^2\, \al^{-r}.
\end{equation} 
Then, by \cite[Theorem 4.1]{ST}, there exists a constant $C$ such that 
\begin{align*}
	\bigopnorm{\cM^o}{\lu{\fT}; \lorentz{1}{\infty}{\fT}}
        & \leq C\, \sup_{n \in \BN} \, 2^{n/2} \, \sum_{r \in \BN: \al^r \ge 2^{n-1}/c_2} \cE_\fT(r) \, c_2^{1/2} \al^{r/2} \\ 
	& \leq C\, \sup_{n \in \BN} \, 2^{n/2} \, \sum_{r \in \BN: \al^r \ge 2^{n-1}/c_2} \, \al^{-r/2} \\ 
	& \leq C\, \sup_{n \in \BN} \, 2^{n/2} \, 2^{-n/2},  
\end{align*}
which is finite.  Here $\cM^o$ is the spherical maximal operator, which is pointwise comparable to $\cM$, because every ball can be written 
as a disjoint union of spheres.
\end{proof}

As a consequence, we show that for every pair $(a,b)$ of integers, with $2\leq a <b$, there are trees in the class $\Upsilon_{a,b}$, 
where $\cM$ is of weak type $(1,1)$.  
The main idea is to arrange matters so that vertices with different valences are not too neatly separated.  

A simple example is the \textit{semi-homogeneous tree} $\fV_{a,b}$ with valences $a$ and~$b$, i.e. a tree in which every vertex has either valence $a+1$
or $b+1$, and such that adjacent vertices have different valences.

Indeed, it is straightforward to check that there exist positive constants $c_1$ and $c_2$, depending on $a$ and~$b$, such that 
\begin{equation} \label{f: growth semihomogeneous}
c_1\, (ab)^{r/2}
\leq \bigmod{B_r(x)} 
\leq c_2\, (ab)^{r/2}
\quant x \in \fS \quant r \in \BN.
\end{equation}
Theorem~\ref{t: weak11} then implies that $\cM$ can be of weak type $(1,1)$, no matter how large the ratio $b/a$ is.

\section{The uncentred maximal function}
\label{s: Unboundedness nc}

Clearly $\cN$ dominates $\cM$ pointwise.  However, contrary to what happens on doubling metric measured spaces, $\cN$ may be much larger than $\cM$.
For instance, $\cM$ is bounded on $\lp{\fT_b}$ for all $p>1$, whereas $\cN$ is bounded on $\lp{\fT_b}$ only for $p>2$.

Furthermore, if $\fT$ belongs to the class $\Upsilon_{a,b}$, then
it is straightforward to check that there exists a constant~$C$ (depending only on $a$ and $b$) such that
$$
\cN f(x)
\leq C\, \cM_{2\pab} f (x)
\quant x \in \fT:
$$
here $\pab$ is as in \eqref{f: def tau}.
This follows directly from the remark that
if $r$ is a nonnegative integer and $B$ is a ball of radius $r$ containing the point~$x$, then the ball $B_{2r}(x)$ contains $B$, 
and $\mod{B} \geq C \, \mod{B_{2r}(x)}^{1/(2\pab)}$.  

Thus, $L^p$ bounds for the modified maximal operator $\cM_{2\pab}$ imply similar bounds for $\cN$.   In particular, 
$\cM_2$ is of restricted weak type $2$ \cite[Theorem~5.1]{V} on the homogeneous tree $\fT_b$, whence so is $\cN$ (for in this case $\pab = 1$). 
By interpolation $\cN$ is bounded on $\lp{\fT_b}$ for all $p > 2$.  

It is natural to wonder what happens in the case where $a<b$.  It would not be unreasonable to conjecture that for each tree $\fT$ in the 
class $\Upsilon_{a,b}$ with $b\leq a^2$
there exists a threshold $q<\infty$ such that $\cN$ is bounded on $\lp{\fT}$ for all $p>q$.  However, this fails, as we show in 
Theorem~\ref{t: unboundedness nc} below: there exist trees in the class $\Upsilon_{a,b}$ such that $\cN$ is bounded on $\lp{\fT}$ if and only if $p=\infty$.  
Furthermore Theorem~\ref{t: unboundedness nc} points out the striking difference that can occur between the $L^p$ boundedness properties of the centred and
uncentred maximal functions on graphs with exponential volume growth.  As already mentioned, this difference occurs also in the case of homogeneous 
trees ($\cM$ is of weak type $(1,1)$, $\cN$ is of restricted weak type $2$ and unbounded on $\lp{\fT_b}$ for $p<2$), but it can be much wider
for nonhomogeneous trees.

Suppose that $a$ and $b$ are integers such that $2\leq a < b$.  Preliminarily, we define a tree $\fF_{a,b}$ in the class $\Upsilon_{a,b}$ that
will play a fundamental role in this section.  
The tree $\fF_{a,b}$ contains a reference point $o$ with $a+1$ neighbours $g_0,\ldots,g_a$
with the following property: the vertex $g_0$ and all the vertices of the tree on geodesics starting with $[o,g_0]$ have valence $b+1$,
and all the vertices $g_1, \ldots, g_a$ together with the vertices on geodesics starting with $[o,g_j]$ for some $j \in \{1,\ldots,a\}$
have valence $a+1$.  Denote by $\fF_a$ and $\fF_b$ the set of vertices in $\fF_{a,b}$ with valence $a+1$ and $b+1$, respectively.  
Clearly $\fF_{a,b} = \fF_a \cup \fF_b$.   

\begin{theorem} \label{t: unboundedness nc}
Suppose that $2\leq a <b$.  The following hold:
	\begin{enumerate}
		\item[\itemno1]
			$\cN$ is unbounded on $\lp{\fF_{a,b}}$ for all $p$ in $[1,\infty)$; 
		\item[\itemno2]
			$\cM$ is bounded on $\lp{\fF_{a,b}}$ whenever $p\in (1,\infty)$, and it is of weak type $(1,1)$. 
	\end{enumerate}
\end{theorem}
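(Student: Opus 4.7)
The theorem splits along parts \rmi\ and \rmii, which I plan to treat separately. For \rmi, I will exhibit test functions $f_n$ concentrated in a ball inside the $\fF_b$-branch for which $\norm{\cN f_n}{p}/\norm{f_n}{p}\to\infty$ for every $p\in[1,\infty)$. For \rmii, I decompose $f=f_a+f_b$ with $f_a:=f\One_{\fF_a}$ and $f_b:=f\One_{\fF_b}$, and bound $\cM f$ on each side of the tree by a ``same-side'' maximal operator plus a ``cross-term'' whose contribution to any level set is controlled by a geometric decay away from $\{o,g_0\}$.

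For \rmi, take $f_n:=\One_{B_n^{\fF_b}(g_0)}$, so $\norm{f_n}{p}^p\asymp b^n$. Given $y\in\fF_a$ at depth $k:=d(y,o)=2m+n+1$ for some nonnegative integer $m$, let $w$ be the ancestor of $y$ at depth $m$ in $\fF_a$, so that $d(w,y)=m+n+1$ and $d(w,g_0)=m+1$. Then the ``sandwich ball'' $B_{m+n+1}(w)$ contains both $y$ and $\supp f_n$, and a direct decomposition into its $\fF_a$- and $\fF_b$-parts gives $\mod{B_{m+n+1}(w)}\asymp a^{m+n+1}+b^n$. Hence $\cN f_n(y)\gtrsim b^n/(a^{m+n+1}+b^n)\geq 1/2$ whenever $a^{m+n+1}\leq b^n$, equivalently $m\leq n\log_a(b/a)-1=:M_n$. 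Summing the $a^{2m+n+1}$ admissible vertices at each such depth yields $\mod{\{\cN f_n\geq 1/2\}\cap\fF_a}\gtrsim b^{2n}/a^{n-1}$, so $\norm{\cN f_n}{p}^p/\norm{f_n}{p}^p\gtrsim (b/a)^n\to\infty$. The point of placing $w$ at the specific depth $m$ is to minimise $\mod{B_{m+n+1}(w)}$ while still covering both $y$ and $\supp f_n$; centring instead at $y$, $o$, or $g_0$ produces only bounded ratios insufficient for the divergence.

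For \rmii, fix $x\in\fF_a$ and write $k:=d(x,o)$. Since $B_r(x)\cap\fF_a=B_r^{\fF_a}(x)$ and $\mod{B_r(x)}\geq\mod{B_r^{\fF_a}(x)}$, we have $\cM f_a(x)\leq\cM^{\fF_a}(f|_{\fF_a})(x)$, where $\cM^{\fF_a}$ denotes the centred maximal operator on the subtree $\fF_a$. Moreover $B_r(x)$ meets $\fF_b$ only for $r\geq k+1$, and, since $r\mapsto\mod{B_r(x)}$ is nondecreasing with $\mod{B_{k+1}(x)}\gtrsim a^{k+1}$ (by counting descendants of $x$ in $\fF_a$), we get $\cM f_b(x)\leq\norm{f_b}{1}/\mod{B_{k+1}(x)}\lesssim a^{-(k+1)}\norm{f_b}{1}$; the analogous estimates with $a,b,o,g_0$ swapped hold on $\fF_b$. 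Both $\fF_a$ and $\fF_b$ have uniform spherical growth $\mod{S_r(x)}\asymp a^r$ and $\mod{S_r(x)}\asymp b^r$, so the argument of Theorem~\ref{t: weak11} shows that $\cM^{\fF_a}$ and $\cM^{\fF_b}$ are of weak type $(1,1)$ on their respective subtrees. The cross-term level set $\{x\in\fF_a:a^{-(k+1)}\norm{f_b}{1}>\la\}=\{d(x,o)\leq\log_a(\norm{f_b}{1}/\la)-1\}$ has cardinality $\lesssim\norm{f_b}{1}/\la$ by the $a$-regular growth of $\fF_a$; similarly on $\fF_b$. Combining the four contributions yields $\mod{\{\cM f>2\la\}}\lesssim\norm{f}{1}/\la$, and Marcinkiewicz interpolation with the trivial $L^\infty$ bound gives boundedness on $\lp{\fF_{a,b}}$ for $p>1$.

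I expect the delicate point in \rmii\ to be the cross-term: the naive bound $\cM f_b(x)\lesssim\cM^{\fF_b}(f|_{\fF_b})(g_0)$ is uniform in $x\in\fF_a$ and hence useless on an infinite-measure set, so extracting the geometric decay $a^{-(k+1)}$ from the monotonicity of $r\mapsto\mod{B_r(x)}$ together with $\mod{B_{k+1}(x)}\gtrsim a^{k+1}$ is essential. In \rmi, the only technicality is matching parities of $k$ and $n+1$ so that $m=(k-n-1)/2$ is a nonnegative integer, which merely changes constants.
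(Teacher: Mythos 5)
Your proposal is correct and follows essentially the same route as the paper: for \rmi\ you place ball centres in the thin branch $\fF_a$ so that a ball of radius comparable to its depth barely engulfs the mass sitting in $\fF_b$ (you use the indicator of a ball and sum over admissible depths, the paper uses the indicator of a sphere and a single depth $\lfloor(2\pab-1)n\rfloor$, but the mechanism and the resulting growth $(b/a)^n$ are identical); for \rmii\ you use the same splitting $f=f_a+f_b$, the same reduction of the same-side terms to weak type $(1,1)$ on an $a$-regular (resp.\ $b$-regular) tree, and the same geometric-decay bound $\cM f_b(x)\lesssim \|f_b\|_1/V^a\big(\vr_b(x)\big)$ for the cross-terms. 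The only cosmetic caveat is that $\fF_a$ is not literally in $\Upsilon_{a,a}$ (the root $o$ has valence $a$ in the subtree), so one should either say ``the argument of Theorem~\ref{t: weak11} applies since $a^r\leq\mod{S_r(x)}\leq 2a^r$'' or, as the paper does, extend $f_a$ by zero to $\fT_a$.
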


\begin{proof}
Recall that $\pab$ denotes $\log_ab$, and observe that $\pab >1$.  

First we prove \rmi.  For large integers $n$, consider the sets
$$
	E_n := \big\{x \in \fF_b: \mod{x} = n\big\}
	\quad\hbox{and}\quad
	F_n := \big\{x \in \fF_a: \mod{x} = \lfloor (2\pab - 1)\, n\rfloor\big\},
$$
where $\lfloor \cdot \rfloor$ denotes the integer part function.  Clearly 
$$
	\bigmod{E_n} = b^{n-1}\quad\hbox{and}\quad \bigmod{F_n} = a^{\lfloor (2\pab - 1)\, n\rfloor}.   
$$
We want to estimate from below the measure of the level set $\{\cN \One_{E_n}>c\}$, where $c$ is a suitable positive constant, independent
of $n$, that will be determined below. 

For $x$ in $F_n$, consider the unique point $y$ on the geodesic joining $x$ and~$o$ at distance $\lfloor \pab \, n\rfloor$ from $x$.  
Of course $y$ depends on $x$, but this has no consequence for what follows.  Observe that 
$$
	d(o,y) 
	= \lfloor (2\pab - 1)\, n\rfloor-\lfloor \pab \, n\rfloor 
	\leq (2\pab - 1)\, n- \pab \, n + 1
	=  \pab \, n- n + 1, 
$$  
so that 
$$
	d(y, E_n) 
	=    d(y,o) + d(o,E_n) 
	=    \lfloor (2\pab - 1)\, n\rfloor-\lfloor \pab \, n\rfloor + n 
	\leq \pab \,n + 1.  
$$
Set $R_n :=\lfloor \pab \, n\rfloor + 2$.  The estimates above imply that~$B_{R_n}(y)$ contains both $x$ and $E_n$.  
Therefore, given $x$ in $F_n$, there exists~$y$ such that 
$$
	\cN \One_{E_n} (x) 
	\geq \frac{1}{\bigmod{B_{R_n}(y)}} \, \int_{B_{R_n}(y)} \, \One_{E_n} \wrt \mu  
	= \frac{\bigmod{E_n}}{\bigmod{B_{R_n}(y)}}.
$$
By the Cheeger isoperimetric property (see Proposition~\ref{p: Cheeger}), ${\bigmod{B_R(y)}} \leq c_{\fF_{a,b}}^{-1} {\bigmod{S_{R_n}(y)}}$.  
Thus, it remains to estimate $\bigmod{S_{R_n}(y)}$ from above.  Observe that, starting from $y$, we can distinguish the points $v$ in $S_{R_n}(y)$ 
according to the following: 
\begin{enumerate}
	\item[\itemno1]
		$v$ is in $\fF_a$, and the intersection of the geodesics $[y,v]$ and $[o,y]$ is $\{y\}$; 
	\item[\itemno2]
		$v$ is in $\fF_a$, and the intersection of the geodesics $[y,v]$ and $[o,y]$ is the geodesic $[p^{k}(y),y]$ for some $k$ in $[1,d(y,o)]$;
	\item[\itemno3]
		$v$ is in $\fF_b$.
\end{enumerate}
Therefore
\begin{equation} \label{f: counterex noncentred intermediate}
\begin{aligned}
\bigmod{S_{R_n}(y)}
	& = a^{R_n} + \sum_{k=1}^{d(y,o)} \, (a-1)\, a^{R_n-k-1} + b^{R_n-d(y,o)-1} \\
	& = a^{R_n-1} \, \big(1 - a^{-d(y,o)}+a\big) + b^{R_n-d(y,o)-1}. 
\end{aligned}
\end{equation}
Since $d(o,y) = \lfloor (2\pab - 1)\, n\rfloor-\lfloor \pab \, n\rfloor$, we have 
$R_n-d(y,o)-1 \leq n+2$.  Furthermore $a^{R_n} \leq a^{\pab \, n+ 2} = a^2\, b^n$.  These estimates, together with 
\eqref{f: counterex noncentred intermediate}, imply that 
$$
\bigmod{S_{R_n}(y)}
\leq \big(2\, a^2+b^2\big)\, b^n.
$$
Altogether, we can conclude that 
$$
	\cN \One_{E_n} (x) 
	\geq c_{\fF_{a,b}}\, \frac{\bigmod{E_n}}{\big(2\, a^2+b^2\big)\, b^n}
	= c_{\fF_{a,b}}\, \frac{b^{n-1}}{\big(2\, a^2+b^2\big)\, b^n} 
	\geq \frac{c_{\fF_{a,b}}}{\big(2\, a^2+b^2\big)\, b} 
$$
for every $x$ in $F_n$.   
Now, set $\ds c := \frac{c_{\fF_{a,b}}}{\big(2\, a^2+b^2\big)\, b}$.
Since $\cN \One_{E_n} \geq c$ on~$F_n$,
$$
	\frac{\bignormto{\cN \One_{E_n}}{p}{p}}{\bignormto{\One_{E_n}}{p}{p}} 
	\geq c^p \, \frac{\bigmod{F_n}}{\bigmod{E_n}}.
$$
If $\cN$ were bounded on $\lp{\fF_{a,b}}$ for some $p$ in $[1,\infty)$, 
then there would exist a constant $C$, independent of $n$, such that $\bigmod{F_n} \leq C\, \bigmod{E_n}$.  
By the definition of $\pab$, 
$$
\bigmod{F_n} 
= a^{\lfloor (2\pab - 1)\, n\rfloor}
\geq \frac{1}{a} \, \Big(\frac{b^{2}}{a}\Big)^n.
$$ 
Therefore the following should hold
$$
\frac{1}{a} \, \Big(\frac{b^{2}}{a}\Big)^n
\leq C\, b^{n},
\qquad\hbox{equivalently}\qquad
\Big(\frac{b}{a}\Big)^n
\leq C
$$
for all large $n$, which is a contradiction, because $a<b$.  

\medskip
Next we prove \rmii.  Recall that $\fF_{a,b} = \fF_a \cup \fF_b$.  For $x$ in $\fF_{a,b}$, set $\vr_b(x) := d(x, \fF_b)$ and $\vr_a(x) := d(x, \fF_a)$.

For any nonnegative $f$ in $\lu{\fF_{a,b}}$, write $f = f_a + f_b$, where $f_a := f\, \One_{\fF_a}$ and $f_b := f \, \One_{\fF_b}$.  Since
$$
\big\{\cM f> \la\big\}
\subseteq \big\{\cM f_a> \la/2\big\}
\cup \big\{\cM f_b> \la/2\big\},
$$
it suffices to prove that there exists a constant $C$ such that 
\begin{equation} \label{f: estimate fa}
	\bigmod{\big\{x\in \fF_{a,b}: \cM f_a(x) > \la/2\big\}}
	\leq \frac{C}{\la} \bignorm{f_a}{\lu{\fF_{a,b}}}
\quant \la >0,
\end{equation} 
and that a similar estimate holds for $f_b$.  

In order to prove \eqref{f: estimate fa}, we estimate $\cM f_a(x)$ when $x\in \fF_a$ and $x\in \fF_b$ separately.   
Suppose first that $x\in \fF_a$.   It is straightforward to check that 
$$
\bigmod{B_R(x)}
\geq V^a(R)  
\quant R\geq 0,
$$
and $\bigmod{B_R(x)} = V^a(R)$ if $R \leq \vr_b(x)$.
Therefore 
$$
\frac{1}{\bigmod{B_R(x)}} \, \int_{B_R(x)} f_a \wrt\mu 
\leq \frac{1}{V^a(R)} \, \int_{B_R(x)} f_a \wrt\mu 
\quant x \in \fF_a.  
$$
Consider the homogeneous tree $\fT_a$, a reference point $o_a$ in it, and the neighbours $w_0,\ldots, w_a$ of $o_a$ in $\fT_a$. 
The subset $\fT_a \setminus E(w_0)$, where $E(w_0)$ 
is the branch of $\fT_a$ of all points $y$ such that the geodesic $[o_a,y]$ starts with $[o_a,w_0]$, is isometric to $\fF_a$ via an isometry
$\cJ$ that maps $o$ to $o_a$, and the neighbours $g_1,\ldots, g_a$ of $o$ in $\fF_{a,b}$ to the neighbours $w_1,\ldots, w_a$ of $o_a$ in $\fT_a$.

Consider the function $f_a^\sharp$ on $\fT_a$ that vanishes on $E(w_0)$ and is equal to $f_a(x)$ at the point $\cJ(x)$. 
Then the average on the right hand side of the estimate above agrees with the average over the ball with centre $\cJ (x)$ and radius $R$ 
of the function $f_a^\sharp$ on $\fT_a$.  
Thus, $\cM f_a (x) \leq \cM f_a^\sharp (\cJ(x))$ for every $x$ in $\fF_a$.  
Since $\bignorm{f_a^\sharp}{\lu{\fT_a}} = \bignorm{f_a}{\lu{\fF_a}}$ and the centred maximal function on $\fT_a$ is of weak type $(1,1)$,
\begin{equation} \label{f: intermediate weak connected sum}
\begin{aligned}
\bigmod{\{x\in \fF_a: \cM f_a (x) > \la/2\}}
	& \leq \bigmod{\{y\in \fT_a: \cM f_a^\sharp (y) > \la/2\}} \\
	& \leq  \frac{C}{\la}\, \bignorm{f_a^\sharp}{\lu{\fT_a}} \\  
	& =     \frac{C}{\la}\, \bignorm{f_a}{\lu{\fF_a}}.  
\end{aligned}
\end{equation}

Suppose now that $x\in \fF_b$.  Observe that if $R \leq \vr_a(x)$, then $\bigmod{B_R(x)} = V^b(R)$.
Furthermore, if $R > \vr_a(x)$, then $B_R(x)$ contains all points in $\fF_b$, at distance $R$ from $x$.  The number of such points is equal
to $b^R$, and notice that $\ds b^R \geq \frac{b-1}{b+1} \, V^b(R)$, as a straightforward computation shows (see \eqref{f: volume profile}).  Therefore  
$$
\begin{aligned}
\cM f_a (x) 
	& \leq \frac{b+1}{b-1} \, \sup_{R \geq \vr_a(x)} \frac{1}{V^b(R)} \, \int_{\fF_a\cap B_{R-\vr_a(x)}(o)} f_a \wrt \mu \\
	& \leq \frac{b+1}{b-1} \,\frac{1}{V^b\big(\vr_a(x)\big)}\, \bignorm{f_a}{\lu{\fF_a}}
	\quant x \in \fF_b,
\end{aligned}
$$
because the support of $f_a$ is contained in $\fF_a$.  Hence 
$$
\big\{x\in \fF_b: \cM f_a (x) > \la/2 \big\}
\subseteq \Big\{x\in \fF_b: V^b\big(\vr_a(x)\big) < 2\, \frac{b+1}{b-1} \,\frac{\bignorm{f_a}{\lu{\fF_{a,b}}}}{\la}\Big\},
$$
so that 
$$
\bigmod{\big\{x\in \fF_b: \cM f_a (x) > \la/2 \big\}}
\leq 2\, \frac{b+1}{b-1} \,\frac{\bignorm{f_a}{\lu{\fF_{a,b}}}}{\la}.
$$
By combining the estimate above and \eqref{f: intermediate weak connected sum}, we conclude that 
$$
\bigmod{\big(\big\{x\in \fF_{a,b}: \cM f_a (x) > \la/2\big\}}
\leq \frac{C}{\la}\, \bignorm{f_a}{\lu{\fF_{a,b}}},
$$
as required.

\smallskip
Next we consider $f_b$.  We proceed much as above for $f_a$, and estimate $\cM f_b(x)$ when $x\in \fF_b$, and $x\in \fF_a$ separately.   

Suppose first that $x\in \fF_b$.  Clearly $\bigmod{B_R(x)} = V^b(R)$ in the case where $R \leq \vr_a(x)$, and 
$\ds \bigmod{B_R(x)} \geq b^R \geq \frac{b-1}{b+1} \, V^b(R)$ if $R > \vr_a(x)$.
Therefore $\ds\bigmod{B_R(x)} \geq \frac{b-1}{b+1} \, V^b(R)$ for every $R$, whence 
$$
\frac{1}{\bigmod{B_R(x)}} \, \int_{B_R(x)} f_b \wrt\mu 
\leq \frac{b+1}{b-1} \,\frac{1}{V^b(R)} \, \int_{\fF_b\cap B_R(x)} f_b \wrt\mu
\quant x \in \fF_b.  
$$
We can identify $\fF_b$ isometrically to a subset of the homogeneous tree~$\fT_b$ by identifying the vertex $g_0$ in $\fF_{a,b}$
with the reference point $o_b$ in $\fT_b$, and removing from $\fT_b$ the set $E(z_0)$
(here $z_0,\ldots, z_b$ are the neighbours of $o_b$ in $\fT_b$).  Denote by $\cJ$ this isometry, and by $f_b^\sharp$ the function on 
$\fT_b$ that vanishes on $E(z_0)$ and is equal to $f_b(x)$ at the point $\cJ(x)$. 

Then the average on the right hand side of the estimate above agrees with the average over the ball with centre $\cJ (x)$ and radius $R$ of the
function~$f_b^\sharp$ on $\fT_b$.  Thus, $\ds \cM f_b (x) \leq \frac{b+1}{b-1} \,\cM f_b^\sharp (\cJ(x))$ for every $x$ in $\fF_b$.  
Since $\bignorm{f_b^\sharp}{\lu{\fT_b}} = \bignorm{f_b}{\lu{\fF_b}}$, and the centred maximal function on $\fT_b$ is of weak type $(1,1)$,
\begin{equation} \label{f: intermediate weak connected sum II}
\begin{aligned}
\bigmod{\{x\in \fF_b: \cM f_b (x) > \la/2\}}
	& \leq \bigmod{\{y\in \fT_b: \cM f_b^\sharp (x) > \la/2\}} \\
	& \leq  \frac{C}{\la}\, \bignorm{f_b^\sharp}{\lu{\fT_b}} \\  
	& =     \frac{C}{\la}\, \bignorm{f_b}{\lu{\fF_b}}.  
\end{aligned}
\end{equation}

Finally, suppose that $x$ is in $\fF_a$.  Clearly $\bigmod{B_R(x)} = V^a(R)$ in the case where $R \leq \vr_b(x)$, and 
$\ds \bigmod{B_R(x)} \geq V^a\big(\vr_b(x)\big)$ if $R > \vr_b(x)$.  Thus, 
$$
\begin{aligned}
\cM f_b (x) 
& = \sup_{R > \vr_b(x)} \frac{1}{\bigmod{B_R^a(x)}} \, \int_{\fF_b\cap B_{R-\vr_a(x)}(o)} f_b \wrt \mu \\
& \leq \frac{C}{V^a\big(\vr_b(x)\big)} \, \bignorm{f_b}{\lu{\fF_b}}
\quant x \in \fF_a.  
\end{aligned}
$$
Hence 
$$
\big\{x\in \fF_a: \cM f_b (x) > \la/2 \big\}
\subseteq \Big\{x\in \fF_a: V^a\big(\vr_b(x)\big) < \frac{\bignorm{f_b}{\lu{\fF_{a,b}}}}{\la}\Big\},
$$
so that 
$$
\bigmod{\big\{x\in \fF_a: \cM f_b (x) > \la/2 \big\}}
\leq \frac{\bignorm{f_b}{\lu{\fF_{a,b}}}}{\la}.
$$
By combining the estimate above and \eqref{f: intermediate weak connected sum II}, we conclude that 
$$
\bigmod{\big(\big\{x\in \fF_{a,b}: \cM f_b (x) > \la/2\big\}}
\leq \frac{C}{\la}\, \bignorm{f_b}{\lu{\fF_{a,b}}},
$$
as required.  
\end{proof}

We end this section with a slight variant of \cite[Theorem~5.1]{V}.

\begin{theorem} \label{t: uncentred lower bound}
Suppose that $1\leq a \leq b$, that $b\geq 2$, and that $\fT$ is a tree in the class $\Upsilon_{a,b}$.  Assume that there exists a positive constants 
$C$ such that $\bigmod{B_r(x)} \geq C \, b^r$ for all vertices $x$ and nonnegative integers $r$.   Then the operator $\cN$ is of restricted weak type $(2,2)$.   
\end{theorem}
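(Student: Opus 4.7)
The plan is to reduce the result to Veca's restricted weak type $(2,2)$ theorem for the modified maximal operator $\cM_2^b$ on the homogeneous tree $\fT_b$ (\cite[Theorem~5.1]{V}), exploiting the isometric embedding of any tree in $\Upsilon_{a,b}$ into $\fT_b$ already recalled in Section~\ref{s: Preliminaries}.

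First, embed $\fT$ isometrically onto a subtree $\wt\fT$ of $\fT_b$ via the map $\cJ$, and for any $f$ on $\fT$ extend it by zero to a function $f^\sharp$ on $\fT_b$, so that $\bignorm{f^\sharp}{\lorentz{2}{1}{\fT_b}} = \bignorm{f}{\lorentz{2}{1}{\fT}}$. Next, the key pointwise bound: suppose $B = B_r(y)$ is a ball of $\fT$ containing $x$. By the triangle inequality $B \subseteq B_{2r}(x)$, and by hypothesis $|B| \geq C\, b^{r}$. Since $\int_{B_{2r}(x)} |f| \wrt\mu = \int_{B_{2r}^b(x)} |f^\sharp| \wrt \mu$, and since $b^r$ is comparable to $V^b(2r)^{1/2}$ (indeed $V^b(2r) \leq \tfrac{b+1}{b-1}\, b^{2r}$), we obtain, for some constant $C'$ depending only on $b$,
$$
\frac{1}{|B|}\int_{B} |f| \wrt\mu
\leq \frac{C'}{V^b(2r)^{1/2}} \int_{B_{2r}^b(x)} |f^\sharp| \wrt \mu.
$$
Taking the supremum over all balls $B$ of $\fT$ containing $x$ yields the pointwise domination
$$
\cN f(x) \leq C' \, \cM_2^b f^\sharp\big(\cJ(x)\big) \quant x \in \fT.
$$

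Finally, Veca's theorem \cite[Theorem~5.1]{V} states that $\cM_2^b$ is of restricted weak type $(2,2)$ on $\fT_b$. Combining this with the pointwise bound and the invariance of the Lorentz (quasi-)norms under the zero-extension gives
$$
\bignorm{\cN f}{\lorentz{2}{\infty}{\fT}}
\leq C' \bignorm{\cM_2^b f^\sharp}{\lorentz{2}{\infty}{\fT_b}}
\leq C'' \bignorm{f^\sharp}{\lorentz{2}{1}{\fT_b}}
= C'' \bignorm{f}{\lorentz{2}{1}{\fT}},
$$
which is the desired restricted weak type $(2,2)$ estimate.

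The only substantive step is the pointwise domination: one needs to replace the off-centre ball $B$ by the centred ball $B_{2r}(x)$ (costing just the trivial inclusion) and to trade the hypothesis $|B_r(y)| \geq C b^r$ against $V^b(2r)^{1/2}$. Both are essentially one-line inequalities, so there is no genuine obstacle beyond setting up the isometric embedding and invoking Veca; the result is really a transparent corollary of \cite[Theorem~5.1]{V} once the volume lower bound is in place.
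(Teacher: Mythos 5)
Your proposal is correct and follows essentially the same route as the paper's proof: the inclusion $B_r(y)\subseteq B_{2r}(x)$, the volume hypothesis traded against $V^b(2r)^{1/2}$ to get the pointwise bound $\cN f\leq C\,\cM_2^b f^\sharp$ after the isometric embedding into $\fT_b$, and then Veca's restricted weak type $(2,2)$ theorem. No gaps.
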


\begin{proof}
Suppose that $x$ is in $\fT$, and $B_R$ is a ball of radius $R$ containing $x$.  Then  
\begin{equation} \label{f: uncentred lower bound}
\frac{1}{\bigmod{B_R}}\, \int_{B_R} \bigmod{f} \wrt \mu 
\leq \frac{1}{C\, b^R}\, \int_{B_R} \bigmod{f} \wrt \mu 
\leq \frac{1}{C\, b^R}\, \int_{B_{2R}(x)} \bigmod{f} \wrt \mu.  
\end{equation}
As explained just above Theorem~\ref{t: main ab}, $\fT$ is isometric to a subtree of $\fT_b$, so that we can actually assume that $\fT$
is contained in $\fT_b$.  Hence any function $f$ on $\fT$ can be trivially extended to a function~$f^\sharp$ on $\fT_b$ as follows
$$
	f^\sharp (x) 
	:= f(x) \, \One_\fT (x) 
	\quant x \in \fT_b.
$$
Notice that $\bignorm{f}{\lorentz{p}{q}{\fT}} = \bignorm{f^\sharp}{\lorentz{p}{q}{\fT_b}}$ for all indices $p$ and $q$ such that $1\leq p,q\leq \infty$.
Now, observe that $b^R \geq c\, V^b(2R)^{1/2}$ for some positive constant~$c$.  This and \eqref{f: uncentred lower bound} imply that
$$
\frac{1}{\bigmod{B_R}}\, \int_{B_R} \bigmod{f} \wrt \mu 
\leq \frac{C}{V^b(2R)^{1/2}}\, \int_{B_{2R}^b(x)} \bigmod{f^\sharp} \wrt \mu.  
$$
By taking the supremum with respect to $R$ in $\BN$, we see that 
$$
	\cN f(x) 
	\leq C \, \cM_{2}^b f^\sharp (x) 
	\quant x \in \fT. 
$$
Therefore
$$
	\begin{aligned}
	C \bignorm{\cN f}{\lorentz{2}{\infty}{\fT}}
		& \leq  \bignorm{\cM_{2}^b f^\sharp}{\lorentz{2}{\infty}{\fT}} \\
		& \leq  \bignorm{\cM_{2}^b f^\sharp}{\lorentz{2}{\infty}{\fT_b}} \\
		& \leq  \bigopnorm{\cM_{2}^b}{\lorentz{2}{1}{\fT_b};\lorentz{2}{\infty}{\fT_b}} \bignorm{f^\sharp}{\lorentz{2}{1}{\fT_b}} \\
		& \leq  \bigopnorm{\cM_{2}^b}{\lorentz{2}{1}{\fT_b};\lorentz{2}{\infty}{\fT_b}} \bignorm{f}{\lorentz{2}{1}{\fT}};
	\end{aligned}
$$
the third inequality above follows from Veca's result \cite[Theorem~5.1]{V}. 

This concludes the proof of the theorem.  
\end{proof}

\section{Robustness of the results}
\label{s: Robustness}
The methods developed so far seem to be nonamenable to a straightforward extension to graphs.  However, simple examples show that
there are graphs, which are not trees, where the analogues of the results so far proved for trees hold.  This raises the question of
finding reasonably general classes of graphs, which share with trees properties similar to those discussed in the previous sections. 

Our analysis in this section is based on the notion of rough isometry, in the sense of M.~Kanai~\cite{K} 
(see Definition~\ref{def: RI} below).   

First, we need more notation and terminology.  We consider only \textit{simple graphs} $\fG$, i.e. undirected graphs without self-loops and multi-edges.  
There is a natural distance on simple graphs: $d(x,y)$ denotes the length of the shortest path joining $x$ and $y$.  Note that, contrary to 
what happens on trees, there may be many paths of minimal length joining any two points. 
For any~$x$ in $\fG$, the ball and the sphere with centre~$x$ and radius $r$ will be denoted by $B_r(x)$ and $S_r(x)$.  

We shall consider only connected graphs with \textit{$(1,Q)$-bounded geometry}, i.e. we assume that 
\begin{equation} \label{f: bdd geometry G}
	2\leq \nu(x) \leq Q+1 \quant x \in \fG.   
\end{equation}
We need a few simple geometric properties of $\fG$, established below.
For the sake of notational convenience, for every nonnegative integer $m$, we set 
$$
\ds\Om_{m,Q} := \frac{Q^{m+1}-1}{Q-1}.
$$
Since each vertex in $\fG$ has at most $Q+1$ neighbours, the volume of balls of radius $r$ in~$\fG$ is controlled by $V^Q(r)$, i.e. the volume of
balls with the same radius in~$\fT_Q$.  Thus,   
\begin{equation}\label{f: estball}
|B_{r}(x)|
\leq V^Q(r)
\quant x \in \fG \quant r \in \BN.
\end{equation} 
Also note that 
\begin{equation}\label{f: compball}
|B_{r+n}(x)| \leq  \Om_{n,Q}\,  |B_r(x)|
\quant n \in \BN.
\end{equation}
Indeed,
$$
|B_{r+n}(x)|
=    |B_r(x)|+\sum_{j=r+1}^{r+n} |S_j(x)|  
\leq |B_r(x)|+|S_r(x)| \, \sum_{j=1}^{n} Q^{j};
$$
now \eqref{f: compball} follows directly from this and trivial inequality $|S_r(x)| \leq |B_r(x)|$.

Notice that if $x$ and $y$ are points of $\fG$ at distance at most $K$, then 
\begin{equation}\label{f: compball II}
	\Om_{K,Q}^{-1} \,\,  |B_r(x)| \leq |B_{r}(y)| \leq  \Om_{K,Q} \,\,  |B_r(x)| 
\quant r \in \BN.
\end{equation}
Indeed, $B_r(y) \subseteq B_{r+K}(x)$, by the triangle inequality, so that
$$
|B_r(y)| \leq |B_{r+K}(x)| \leq  \Om_{K,Q}\,\,  |B_r(x)|;
$$
the last inequality follows from \eqref{f: compball}.  By reversing the role of $x$ and $y$, we obtain
$$
|B_r(x)| \leq  \Om_{K,Q}\,\,  |B_r(y)|;
$$
the left inequality in \eqref{f: compball II} follows directly from this.  

\begin{definition} \label{def: RI}
Suppose that $\fG$ and $\fG'$ are two connected simple graphs, with distances $d$ and $d'$, respectively.
A map $\vp : \fG \to \fG'$ is a \textit{strict rough isometry} if there exist positive numbers $K$ and $\be$ such that 
\begin{equation} \label{f: RI uno}
	K:= \max\, \big\{d'\big(\vp(\fG),x'\big): x' \in \fG' \big\}
\end{equation}
is finite, and
\begin{equation} \label{f: RI due}
	d(x,y)-\be \le d'\big(\vp(x),\vp(y)\big) \le d(x,y)+\be
	\quant x,y \in \fG.
\end{equation}
\end{definition} 

Points in $\fG$ and in $\fG'$ will be customarily denoted by \textit{nonprimed} and \textit{primed} lower case latin letters, respectively.
If $x'$ is a vertex in $\fG'$, we denote by $B_r(x')$ and $S_r(x')$ the ball and the sphere with centre~$x'$ and radius $r$. 
Without loss of generality we shall assume that $\be$ is a positive integer.  The next lemma contains a few simple geometric properties of $\vp$.

\begin{lemma} \label{l: simple prop vp}
	Suppose that $\vp$ is a strict rough isometry between $\fG$ and $\fG'$ with $(1,Q)$ and $(1,Q')$-bounded geometry, respectively.  The following hold:
	\begin{enumerate}
		\item[\itemno1]
			$\bigmod{\vp^{-1} (y')} \leq V^Q(\be)$ for every $y'$ in $\vp(\fG)$;  
		\item[\itemno2]
			$\{B_K\big(\vp(x)\big): x \in \fG\}$ is a covering of $\fG'$  with the finite overlapping property.  Furthermore
			\begin{equation} \label{f: combined preimage}
			\bigmod{\big\{x\in \fG: z'\in B_K\big(\vp(x)\big)\big\}}
			\leq V^Q(\be) \,\, V^{Q'}(K).   
			\end{equation} 
	\end{enumerate}
\end{lemma}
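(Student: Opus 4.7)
The plan is to deduce both statements from two easy facts about $\vp$: its lower Lipschitz-type bound on distances forces its fibres to be small, and each of its values accounts for only a ball's worth of preimages.

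First I would prove \rmi. Fix $y'\in\vp(\fG)$ and pick any $x_0\in\vp^{-1}(y')$. For every other $x\in\vp^{-1}(y')$, the equality $\vp(x)=\vp(x_0)=y'$ together with the lower estimate in \eqref{f: RI due} gives
$$
d(x,x_0) - \be \le d'\big(\vp(x),\vp(x_0)\big) = 0,
$$
hence $d(x,x_0) \leq \be$. Therefore $\vp^{-1}(y') \subseteq B_\be(x_0)$, and an application of \eqref{f: estball} with $Q$ and $\fG$ yields $\bigmod{\vp^{-1}(y')} \leq \bigmod{B_\be(x_0)} \leq V^Q(\be)$.

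Next I would prove the covering statement in \rmii. If $z'\in \fG'$, then by \eqref{f: RI uno} there exists $x\in\fG$ with $d'(\vp(x),z')\leq K$, i.e.\ $z'\in B_K\big(\vp(x)\big)$. This shows that the family $\{B_K(\vp(x)):x\in \fG\}$ covers $\fG'$.

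For the overlap bound \eqref{f: combined preimage}, I would rewrite the set counted on the left-hand side. The condition $z'\in B_K(\vp(x))$ is equivalent to $\vp(x)\in B_K(z')$ (here $B_K(z')$ is the ball in $\fG'$). Hence
$$
\big\{x\in \fG: z'\in B_K\big(\vp(x)\big)\big\}
= \vp^{-1}\big(B_K(z')\cap \vp(\fG)\big)
= \bigcup_{y'\in B_K(z')\cap \vp(\fG)} \vp^{-1}(y').
$$
Since $\fG'$ has $(1,Q')$-bounded geometry, \eqref{f: estball} applied in $\fG'$ gives $\bigmod{B_K(z')} \leq V^{Q'}(K)$. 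By \rmi, each fibre $\vp^{-1}(y')$ has cardinality at most $V^Q(\be)$. Combining these two bounds produces \eqref{f: combined preimage}. The finite overlapping property is then immediate, since for each $z'\in\fG'$ the number of balls $B_K(\vp(x))$ containing $z'$ is uniformly bounded by $V^Q(\be)\,V^{Q'}(K)$.

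There is no real obstacle here; the only point that requires a moment of care is the harmless reformulation $z'\in B_K(\vp(x))\Leftrightarrow \vp(x)\in B_K(z')$, which is what converts the problem to counting preimages and lets \rmi\ do the work.
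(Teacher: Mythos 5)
Your proposal is correct and follows essentially the same route as the paper: the fibre bound comes from the lower inequality in \eqref{f: RI due} combined with \eqref{f: estball}, the covering property from \eqref{f: RI uno}, and the overlap bound from the reformulation $z'\in B_K\big(\vp(x)\big)\Leftrightarrow \vp(x)\in B_K(z')$ together with part \rmi. Nothing is missing.
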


\begin{proof}
To prove \rmi, observe that if $v$ and $w$ are points in $\fG$ such that $\vp(v) = \vp(w)$, then the left inequality in \eqref{f: RI due} yields
$d(v,w) \leq \be$.  Thus, $w$ belongs to $B_\be(v)$, and \rmi\ follows directly from \eqref{f: estball}.

Next we prove \rmii.  By \eqref{f: RI uno}, the sequence $\{B_K\big(\vp(x)\big): x \in \fG\}$ is a covering of $\fG'$.

Now suppose that $z'$ is a point in $\fG'$.  If $z'$ belongs to $B_K\big(\vp(x)\big)$, then $\vp(x)$ belongs to $B_K(z')$.  
Since $\bigmod{B_K(z')} \leq V^{Q'}(K)$, there are at most $V^{Q'}(K)$ balls of the given covering containing $z'$.  Then \eqref{f: combined preimage}
follows from this and~\rmi. 
\end{proof}


The next theorem is the main result of this section.  Preliminarily, we need more notation.  Suppose that $\vp: \fG\to\fG'$ is a strict
rough isometry, as in Definition~\ref{def: RI}, and that $f$ is a function on $\fG'$.  Then $\pi f$ will denote the function on $\vp(\fG)$, defined by 
\begin{equation} \label{f: pi f}
	(\pi f) (x')
	:= \int_{B_K(x')} \mod{f} \wrt \mu
	\quant x'\in \vp(\fG).  
\end{equation}
Given a function $f$ on $\fG'$, and a nonnegative integer $R$, consider the functions~$A_R f$ and $\cA_R f$, defined by 
$$
	A_R f(z')
	:= \frac{1}{\bigmod{B_R(z')}} \, \int_{B_R(z')} \mod{f} \wrt \mu
	\quant z' \in \fG',
$$
and 
$$
	\cA_R f(z')
	:= \sup_{B\in \fB_R: B \ni z'} \, \frac{1}{\bigmod{B}} \, \int_{B} \mod{f} \wrt \mu
	\quant z' \in \fG':  
$$
here $\fB_R$ denotes the collection of all balls of radius $R$ in $\fG'$.

\begin{theorem} \label{t: quasi iso} 
Suppose that $\fG$ and $\fG'$ are connected simple graphs with $(2,Q)$ and $(2,Q')$-bounded geometry, respectively, and that $\vp: \fG\to \fG'$ is 
a strict rough isometry.  Assume that $1\leq p<\infty$, and that $1\leq r\leq s\leq \infty$.  The following hold: 
\begin{enumerate}
	\item[\itemno1] 
		if $\cM$ is bounded from $\lorentz{p}{r}{\fG}$ to $\lorentz{p}{s}{\fG}$, then $\cM$ is bounded from $\lorentz{p}{r}{\fG'}$
		to $\lorentz{p}{s}{\fG'}$;
	\item[\itemno2] 
		if $\cN$ is bounded from $\lorentz{p}{r}{\fG}$ to $\lorentz{p}{s}{\fG}$, then $\cN$ is bounded from $\lorentz{p}{r}{\fG'}$
		to $\lorentz{p}{s}{\fG'}$;
\end{enumerate}
\end{theorem}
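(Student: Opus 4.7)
The plan is to transfer $f$ from $\fG'$ to $\fG$ via the integrated pull-back
$$
f^\flat(x):=\int_{B_K(\vp(x))}\mod{f}\wrt\mu,\qquad x\in\fG,
$$
and to deduce the boundedness of $\cM f$ (resp.\ $\cN f$) on $\fG'$ from the hypothesis applied to $f^\flat$ on $\fG$. The proof will break into three steps: a pointwise domination $\cM f(z')\le C\,\cM f^\flat(x_*)$ (and the analogous statement for $\cN$) whenever $d'(\vp(x_*),z')\le K$; a Lorentz norm comparison $\bignorm{f^\flat}{\lorentz{p}{r}{\fG}}\le C\bignorm{f}{\lorentz{p}{r}{\fG'}}$; and a distributional transfer $\bignorm{\cM f}{\lorentz{p}{s}{\fG'}}\le C\bignorm{\cM f^\flat}{\lorentz{p}{s}{\fG}}$ which pulls the maximal function back to $\fG'$. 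Chaining these with the assumed boundedness on $\fG$ settles both \rmi\ and \rmii\ simultaneously.

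To carry out the pointwise domination, fix $z'\in\fG'$, and by \eqref{f: RI uno} choose $x_*\in\fG$ with $d'(\vp(x_*),z')\le K$. Given any ball $B_r(w')$ in $\fG'$ containing $z'$, the $K$-density of $\vp(\fG)$ lets one pick, for each $y'\in B_r(w')$, some $y\in\fG$ with $d'(\vp(y),y')\le K$; the left inequality in \eqref{f: RI due} then forces $d(x_*,y)\le r+2K+\be$. Writing $\int_{B_{r+2K+\be}(x_*)}f^\flat\wrt\mu$ as the double sum $\sum_{y\in B_{r+2K+\be}(x_*)}\sum_{y'\in B_K(\vp(y))}\mod{f(y')}$ and noting that every $y'\in B_r(w')$ contributes at least once yields
$$
\int_{B_r(w')}\mod{f}\wrt\mu\;\le\;\int_{B_{r+2K+\be}(x_*)}f^\flat\wrt\mu.
$$
To compare denominators I will use Lemma~\ref{l: simple prop vp}~\rmi\ together with the inclusion $B_{r+2K+2\be}(\vp(x_*))\subseteq B_{r+3K+2\be}(w')$ and the local doubling estimate \eqref{f: compball} applied in $\fG'$, which yield $\bigmod{B_{r+2K+\be}(x_*)}\le C\,\bigmod{B_r(w')}$. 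Taking the supremum over $r\in\BN$ (for $\cM$) or over all admissible balls (for $\cN$) then gives the claimed pointwise dominations.

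For the Lorentz comparison of $f$ and $f^\flat$, the implication $f^\flat(x)>\al\Rightarrow\exists\,y'\in B_K(\vp(x))$ with $\mod{f(y')}>\al/V^{Q'}(K)$ combined with Lemma~\ref{l: simple prop vp}~\rmii\ produces the distribution bound $E_{f^\flat}(\al)\le V^{Q}(\be)\,V^{Q'}(K)\,E_f(\al/V^{Q'}(K))$; a change of variable in the defining Lorentz integral then yields $\bignorm{f^\flat}{\lorentz{p}{r}{\fG}}\le C\bignorm{f}{\lorentz{p}{r}{\fG'}}$. Conversely, the pointwise dominations above together with the $K$-density of $\vp(\fG)$ give $\{\cM f>\la\}\subseteq\bigcup_{x:\,\cM f^\flat(x)>\la/C}B_K(\vp(x))$, which combined with Lemma~\ref{l: simple prop vp}~\rmii\ produces $E_{\cM f}(\la)\le V^Q(\be)\,V^{Q'}(K)^2\,E_{\cM f^\flat}(\la/C)$ and hence $\bignorm{\cM f}{\lorentz{p}{s}{\fG'}}\le C\bignorm{\cM f^\flat}{\lorentz{p}{s}{\fG}}$, with the same reasoning for $\cN$. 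The main obstacle is the bookkeeping in the pointwise step: one must simultaneously ensure that the enlarged ball in $\fG$ is centred at $x_*$, so the supremum defining $\cM f^\flat(x_*)$ can be invoked, and that its volume is dominated by a bounded multiple of $\bigmod{B_r(w')}$; it is exactly here that local doubling in $\fG'$, rather than merely the bounded geometry of $\fG$, enters decisively.
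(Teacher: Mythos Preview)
Your argument for part~\rmi\ is correct and is essentially the paper's Step~II and Step~III; in fact it is slightly cleaner, since you avoid the split into radii $R\le K$ and $R>K$, which the paper's own bounds show to be unnecessary in the centred case.

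There is, however, a genuine gap in your treatment of~\rmii. Both the distance estimate $d(x_*,y)\le r+2K+\be$ and the inclusion $B_{r+2K+2\be}(\vp(x_*))\subseteq B_{r+3K+2\be}(w')$ rely on $d'(\vp(x_*),w')\le K$. This holds in the centred case (where $w'=z'$), but in the uncentred case one only has $d'(\vp(x_*),w')\le d'(\vp(x_*),z')+d'(z',w')\le K+r$. Consequently your numerator bound becomes $d(x_*,y)\le 2r+2K+\be$, and when you push the enlarged ball $B_{2r+2K+\be}(x_*)$ forward to $\fG'$ you land in a ball of radius roughly $2r$ centred near $z'$, hence of radius roughly $3r$ around $w'$. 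The local doubling constant in~\eqref{f: compball} then depends on $r$ (it is $\Om_{2r+\mathrm{const},Q'}\asymp (Q')^{2r}$), so the ratio $\bigmod{B_{r+2K+\be}(x_*)}/\bigmod{B_r(w')}$ is not uniformly bounded and the pointwise domination $\cN f(z')\le C\,\cN f^\flat(x_*)$ fails.

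The fix is the one the paper uses: for each ball $B_r(w')\ni z'$ choose, in addition to $x_*$, a point $w\in\fG$ with $d'(\vp(w),w')\le K$, and transfer $B_r(w')$ to the ball $B_{r+2K+\be}(w)$ in $\fG$ centred at $w$ rather than at $x_*$. One checks that $x_*\in B_{r+2K+\be}(w)$, so this is an admissible ball for $\cN f^\flat(x_*)$; and since $d'(\vp(w),w')\le K$, the denominator comparison $\bigmod{B_{r+2K+\be}(w)}\le C\,\bigmod{B_r(w')}$ goes through exactly as in your centred argument. The rest of your Lorentz bookkeeping then applies verbatim.
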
 

\begin{proof}
In this proof $\mu$ will denote both the counting measure on $\fG$ and on~$\fG'$.  

First we prove \rmi.   
We consider the maximal operators 
\begin{equation} \label{f: MK I}
	\cM^K\!  f (z')
	:= \max_{R\leq K} \, A_R f(z')
	\quant z' \in \fG'
\end{equation} 
and 
$$
	\cM_K f(z') 
	:= \sup_{R>K} \, A_R f(z')
	\quant z' \in \fG',  
$$
where $K$ is as in Definition~\ref{def: RI}.
Observe that $\cM f \leq \cM^K\! f + \cM_K f$.  The required boundedness properties of $\cM$ will follow directly from the corresponding properties
of $\cM^K$ and $\cM_K$, which will be established in \textit{Step I} and \textit{Step~III} below, respectively.  

\textit{Step I: boundedness of $\cM^K$}.  
Observe that, trivially, $\ds \cM^K \! f \leq \sum_{R=0}^K \, A_R f$, so that it suffices to prove that $A_R f$ is bounded from $\lorentz{p}{r}{\fG'}$
to $\lorentz{p}{s}{\fG'}$ for $R=0,\ldots,K$.  

Suppose that $\al>0$ and $A_Rf(z') >\al$.  Then there exists a point $w'$ in $B_R(z')$ such that $\bigmod{f(w')} > \al$ (for otherwise $A_Rf(z') \leq \al$).
Thus, we have a map from $E_{A_Rf}(\al)$ to $E_f(\al)$ that associates to $z'$ in $E_{A_Rf}(\al)$ a point~$w'$ in $E_f(\al)$.  This map 
may very well be not injective.
However, a point~$w'$ in~$E_f(\al)$ can be, at most, the image of all the points in the ball $B_R(w')$.  Therefore
\begin{equation} \label{f: distr function}
\bigmod{E_{A_Rf}(\al)}
\leq V^{Q'}\!(R) \, \bigmod{E_f(\al)}.
\end{equation}
Fr{}om the definition of the Lorentz (quasi-) norm, and the contractivity of the injection $\lorentz{p}{r}{\fG'} \subseteq \lorentz{p}{s}{\fG'}$  
we deduce that 
$$
	\bignorm{A_R f}{\lorentz{p}{s}{\fG'}} 
	\leq V^{Q'}\!(R)^{1/p} \bignorm{f}{\lorentz{p}{s}{\fG'}}
	\leq V^{Q'}\!(R)^{1/p} \bignorm{f}{\lorentz{p}{r}{\fG'}}\!.
$$
Hence 
\begin{equation} \label{f: est MsupK}
\bignorm{\cM^K \! f}{\lorentz{p}{s}{\fG'}}  
\leq \sum_{R=0}^K \, \bignorm{A_R f}{\lorentz{p}{s}{\fG'}} 
\leq 2^{1/p} \,\, \Om_{K+2,Q'}^{1/p} \bignorm{f}{\lorentz{p}{r}{\fG'}}\!,
\end{equation}
as required.  

%

\medskip
\textit{Step II}.  
Consider a ball $B_R(z')$ in $\fG'$, with $R>K$.  By Lemma~\ref{l: simple prop vp}~\rmii,
$\{B_K\big(\vp(x)\big): x \in \fG\}$ is a covering of $\fG'$.  Therefore there exists $z$ in $\fG$ such that $d'\big(\vp(z),z'\big) \leq K$
(thus, in particular, $\vp(z)$ belongs to $B_R(z')$).  

We \textit{claim} that 
\begin{equation} \label{f: claim rough iso}
\frac{1}{\bigmod{B_R(z')}} \,\int_{B_R(z')} \mod{f} \wrt \mu 
\leq C_{\be,K} \, \frac{1}{\bigmod{B_{R'}(z)}} \, \int_{B_{R'}(z)} \!\!\big((\pi f)\circ \vp\big) \wrt \mu,
\end{equation}
where $R'= R+2K+\be$, and $\ds C_{\be,K}= V^Q\!(\be) \, \Om_{2K+\be,Q}\, \Om_{\be,Q'}\,\Om_{K,Q'}$.  

To prove the claim, first observe that 
\begin{equation} \label{f: covering K I}
\big\{y\in \fG: B_K\big(\vp(y)\big)\cap B_R(z') \neq \emptyset \}
\subseteq B_{R+2K+\be}(z).   
\end{equation}
Indeed, if $B_K\big(\vp(y)\big)\cap B_R(z') \neq \emptyset$, then $d'\big(\vp(y),z'\big) \leq R+K$, whence 
$$
d(y,z)
\leq d'\big(\vp(y),\vp(z)\big) + \be
\leq d'\big(\vp(y),z') + d'\big(z',\vp(z)\big) + \be
\leq R+2K+\be,
$$
as required. 

Set 
$$
E(z',R) := \big\{y'\in \vp (\fG): B_K(y')\cap B_R(z')  \neq \emptyset\big\}.
$$
By \eqref{f: covering K I} 
\begin{equation} \label{f: covering K II}
\vp^{-1} \big(E(z',R)\big)
\subseteq B_{R+2K+\be}(z).   
\end{equation}
Since $\{B_K\big(y'\big): y' \in \vp(\fG)\}$ is a covering of $\fG'$,  
$
\ds B_R(z') \subseteq \bigcup_{y'\in E(z',R)} \, B_K (y').
$
Therefore
\begin{equation} \label{f: intermediate mean I}
\begin{aligned}
\int_{B_R(z')} \mod{f} \wrt \mu 
	& \leq \sum_{y'\in E(z',R)} \, \int_{B_K(y')} \mod{f} \wrt \mu \\
	& =    \sum_{y'\in E(z',R)} \, (\pi f)(y')  \\
	& \leq \sum_{y\in B_{R+2K+\be}(z)} \, \big[(\pi f)\circ\vp\big](y);
\end{aligned}
\end{equation}
the last inequality follows from \eqref{f: covering K II}.
%
We need to estimate the ratio $\bigmod{B_{R+2K+\be}(z)}/\bigmod{B_R(z')}$.  By \eqref{f: compball},
$$
\bigmod{B_{R+2K+\be}(z)}
\leq \Om_{2K+\be,Q} \, \bigmod{B_{R}(z)}.
$$
Since $\vp$ is at most $V^Q(\be)$-to-one, $\bigmod{B_{R}(z)} \leq V^Q(\be) \, \bigmod{\vp\big(B_{R}(z)\big)}$.  Now, if~$y'$
belongs to $\vp\big(B_{R}(z)\big)$, then $y'= \vp(y) $ for some $y$ in $B_R(z)$, and 
$$
d'\big(\vp(y),\vp(z)\big) 
\leq d(y,z)+\be 
\leq R+\be,
$$
so that $\vp\big(B_{R}(z)\big)$ is contained in $B_{R+\be}\big(\vp(z)\big)$.  Therefore 
$$
\begin{aligned}
\bigmod{B_{R}(z)} 
	& \leq V^Q\!(\be) \, \bigmod{B_{R+\be}\big(\vp (z)\big)}  \\
	& \leq V^Q\!(\be) \, \Om_{\be,Q'} \,\bigmod{B_{R}\big(\vp (z)\big)} \\
	& \leq V^Q\!(\be) \, \Om_{\be,Q'} \, \Om_{K,Q'}\,\bigmod{B_{R}(z')}:
\end{aligned}
$$
we have used \eqref{f: compball} and \eqref{f: compball II} in the last two inequalities.  Altogether, we have proved that   
$$
\frac{\bigmod{B_{R+2K+\be}(z)}}{\bigmod{B_R(z')}}
\leq C_{K,\be},
$$
where $C_{K,\be}$ is as in the claim.  
The claim follows directly from this and \eqref{f: intermediate mean I}. 

\medskip
\textit{Step III: conclusion of the proof of \rmi}. 
By \eqref{f: claim rough iso}, to each $z'$ in $\fG'$, we can associate a point~$z$ in $\fG$ such that $d'\big(z',\vp(z)\big) \leq K$ and 
$$
\cM_K f (z')
\leq C_{K,\be}\,\, \cM \big((\pi f)\circ \vp\big) (z).
$$
Suppose that $\al>0$ and that $z'$ is a point in $E_{\cM_K f}(\al)$.
Since the point $z$ can be associated to at most $V^{Q'}(K)$ points $z'$ in $\fG'$, 
$$
\bigmod{E_{\cM_K f}(\al)}
\leq V^{Q'}\!(K) \,  \bigmod{E_{\cM ((\pi f) \circ \vp)}(\al/C_{K,\be})}.
$$
Consequently
$$
\bignorm{\cM_K f}{\lorentz{p}{s}{\fG'}}
\leq V^{Q'}\!(K)^{1/p} \Big(p\ioty  \bigmod{E_{\cM ((\pi f) \circ \vp)}(\al/C_{K,\be})}^{s/p} \, \al^{s-1} \wrt \al\Big)^{1/s}.\, 
$$
Changing variables and using the definition of the (quasi-) norm of $\lorentz{p}{s}{\fG'}$, we see that 
$$
\bignorm{\cM_K f}{\lorentz{p}{s}{\fG'}}
\leq C_{K,\be}\, V^{Q'}\!(K)^{1/p} \bignorm{\cM \big((\pi f) \circ \vp\big)}{\lorentz{p}{s}{\fG}}.  
$$
Since, by assumption, $\cM$ is bounded from $\lorentz{p}{r}{\fG}$ to $\lorentz{p}{s}{\fG}$, 
$$
\bignorm{\cM \big((\pi f) \circ \vp\big)}{\lorentz{p}{s}{\fG}}
\leq \bigopnormto{\cM}{L^{p,r}(\fG);L^{p,s}(\fG)} \, \bignorm{(\pi f) \circ \vp}{\lorentz{p}{r}{\fG}}.
$$
Note that $\bigmod{(\pi f)\circ \vp(z)} > \al$ if and only if $\ds \int_{B_K(\vp(z))} \, \bigmod{f} \wrt \mu > \al$,
equivalently if and only if $A_Kf\big(\vp(z)\big)>\al/\bigmod{B_K\big(\vp(z)\big)}$.  Thus,
$$
\begin{aligned}
	\bigmod{E_{(\pi f)\circ \vp}(\al)}
	& =    \bigmod{\big\{z\in \fG: A_Kf\big(\vp(z)\big)>\al/\bigmod{B_K\big(\vp(z)\big)}\big\}}  \\
	& \leq \bigmod{\big\{z\in \fG: A_Kf\big(\vp(z)\big)>\al/V^{Q'}\!(K)\}}  \\
	& \leq V^Q\!(\be)\, \bigmod{\big\{z'\in \fG': A_Kf(z')>\al/V^{Q'}\!(K)\}};
\end{aligned}
$$
the second inequality above follows from the obvious estimate $\bigmod{B_K\big(\vp(z)\big)}\leq V^{Q'}\!(K)$, and the third
from the fact that $\vp$ is, at most, $V^Q\!(\be)$-to-one.  Now, \eqref{f: distr function}, with $K$ in place of $R$, yields the estimate
$$
\bigmod{\big\{z'\in \fG': A_Kf(z')>\al/V^{Q'}\!(K)\}}
\leq V^{Q'}\!(K) \, \bigmod{E_f(\al/V^{Q'}\!(K))}.  
$$
By combining these estimates, we see that 
$$
\bigmod{E_{(\pi f)\circ \vp}(\al)}
\leq V^Q\!(\be)\, V^{Q'}\!(K) \, \bigmod{E_f(\al/V^{Q'}\!(K))},   
$$
whence 
$$
\bignorm{(\pi f) \circ \vp}{\lorentz{p}{r}{\fG}}
\leq V^{Q'}\!(K)^{1+1/p} \,  V^Q(\be)^{1/p} \bignorm{f}{\lorentz{p}{r}{\fG'}}.  
$$
Altogether, we obtain the bound
$$
\bignorm{\cM_K f}{\lorentz{p}{s}{\fG'}}
\!\leq C_{K,\be} V^{Q'}\!(K)^{1+2/p} \, V^Q\!(\be)^{1/p} \, \bigopnorm{\cM}{L^{p,r}(\fG);L^{p,s}(\fG)} \!\bignorm{f}{\lorentz{p}{s}{\fG'}}\!\!,
$$
which, combined with \eqref{f: est MsupK}, yields \rmi.  

\smallskip
The proof of \rmii\ copies the strategy of the proof of \rmi; we shall only indicate the nontrivial changes needed and leave 
the other details to the interested reader. 
We consider the maximal operators 
\begin{equation} \label{f: MK I}
	\cN^K\!  f (z')
	:= \max_{R\leq K} \, \cA_R f(z')
	\quant z' \in \fG'
\end{equation} 
and 
$$
	\cN_K f(z') 
	:= \sup_{R>K} \, \cA_R f(z')
	\quant z' \in \fG',  
$$
where $K$ is as in Definition~\ref{def: RI}.  Observe that $\cN f \leq \cN^K\! f + \cN_K f$.  

To prove the boundedness of $\cN^K$ we proceed much as in the proof of Step I above.
Given $\al>0$ and a point $z'$ such that $\cA_Rf(z') >\al$, there exists a ball $B_R$ of radius $R$ containing~$z'$ such that 
$\ds \frac{1}{B_R} \, \int_{B_R} \mod{f} \wrt \mu > \al$.  Therefore there exists a point $w'$ in $B_R$ such that $\bigmod{f(w')} > \al$ 
(for otherwise the average above would be $\leq \al$).

Thus, we have a map from $E_{\cA_Rf}(\al)$ to $E_f(\al)$ that associates to $z'$ in $E_{\cA_Rf}(\al)$ a point~$w'$ in $E_f(\al)$.  
The point~$w'$ in~$E_f(\al)$ can be the image of, at most, all the points in the ball $B_{2R}(w')$ (rather than $B_R(w')$ as in Step I).  Therefore
\begin{equation} \label{f: distr function II}
\bigmod{E_{\cA_Rf}(\al)}
\leq V^{Q'}\!(2R) \, \bigmod{E_f(\al)}.
\end{equation}
Then we proceed, \textit{mutatis mutandis}, as in the proof of Step~I, using \eqref{f: distr function II} instead of \eqref{f: distr function}.

Next, let $B_R$ be a ball of radius $R>K$ containing $z'$.  
By Lemma~\ref{l: simple prop vp}~\rmii, there exists $z$ in $\fG$ such that $d'\big(\vp(z),z'\big) \leq K$
(thus, in particular, $\vp(z)$ belongs to $B_{K}(z')$).  Denote by $w'$ the centre of $B_R$.  Another application of Lemma~\ref{l: simple prop vp}~\rmii\
guarantees the existence of a point $w$ in $\fG$ with $d'\big(\vp(w),w'\big) \leq K$.  Thus, $\vp(z)$ belongs to $B_{R+2K}\big(\vp(w)\big)$.  
Set $R'= R+2K+\be$.  Since $\vp$ is a strict rough isometry, $d(z,w) \leq R'$.  

Thus, we associate to any ball $B_R$ of radius $R$ containing $z'$ a ball $B_{R'}$ in~$\fG$, containing $z$.   
Then, by arguing as in the proof of \eqref{f: claim rough iso}, we can prove that there exists a constant $C_{\be,K}'$ such that 
$$
\frac{1}{\bigmod{B_R}} \,\int_{B_R} \mod{f} \wrt \mu 
\leq C_{\be,K}' \, \frac{1}{\bigmod{B_{R'}}} \, \int_{B_{R'}} \!\!\big((\pi f)\circ \vp\big) \wrt \mu.  
$$
The conclusion then follows by arguing much as in the proof of Step~III above.  

This concludes the proof of \rmii, and of the theorem.
\end{proof}

Theorem~\ref{t: quasi iso} applies, for instance, to all graphs that are compact perturbations of $\fT_b$, i.e. to  graphs $\fG$ with the following property:
there exists compact sets $K\subset \fG$ and $K_b \subset \fT_b$ so that $\fG \setminus K=\fT_b\setminus K_b$.  

To prove this, fix a point $w$ in $K$, and consider the mapping $\vp: \fT_b \to \fG$, defined by 
\begin{enumerate}
	\item[\itemno1] $\vp(x) =x$  for every $x \in \fT_b \setminus K_b$;
	\item[\itemno2] $\vp(K_b) = w$.
\end{enumerate}
Observe that $\vp$ is a strict rough isometric between $\fT_b$ and $\fG$.  Indeed, $d\big(\varphi(\fT_b),x\big) \leq \diam(K)$ for all $x$ in $\fG$, and 
$$
d(x,y)-\eta \leq d\big(\varphi(x),\vp(y)\big) \le d(x,y)+\eta,
$$
where $\eta := \max\big(\diam(K),\diam(K_b)\big)$.

\end{document}